\documentclass{article}
\usepackage{amsmath}
\usepackage{amsfonts}
\usepackage{graphicx, subfigure}
\usepackage[pdftex]{hyperref}
\usepackage[dutch,english]{babel}
\usepackage{amsmath,amssymb}
\usepackage{amsthm}
\usepackage[usenames,dvipsnames]{color}
\usepackage{tikz}
\usetikzlibrary{arrows,automata}
\usepackage{accents}
\usepackage{xcolor}
\usepackage{hyperref}
\usepackage{mathtools}
\usepackage{multicol}
\usepackage{todonotes}

\usepackage{tikz}
\usetikzlibrary{shadows, arrows}

\newcommand{\la}{\lambda}

\DeclareMathOperator{\syn}{Syn}
\DeclareMathOperator{\ifo}{if}
\DeclareMathOperator{\id}{Id}
\DeclareMathOperator{\homc}{End}
\DeclareMathOperator{\nil}{Nil}
\DeclareMathOperator{\im}{Im}
\DeclareMathOperator{\Dim}{dim}

\theoremstyle{plain}
\newtheorem{thr}{Theorem}[section]

\newtheorem{lem}[thr]{Lemma}
\newtheorem{cor}[thr]{Corollary}
\theoremstyle{definition}
\newtheorem{defi}[thr]{Definition}

\theoremstyle{remark}
\newtheorem{remk}{Remark}
\theoremstyle{remark}

\newcommand{\field}[1]{\mathbb{#1}}
\newcommand{\R}{\field{R}}
\newcommand{\N}{\field{N}}
\newcommand{\C}{\field{C}}
\newcommand{\Z}{\field{Z}}

\definecolor{wred}{rgb}{0.7,0.18,0.12}
\definecolor{wgreen}{rgb}{0.1,0.53,0.37}

\numberwithin{equation}{section}

\title{Projection blocks in homogeneous coupled cell networks}
\author{Eddie Nijholt\footnote{\mbox{Department of Mathematics, VU University Amsterdam, The Netherlands, \href{mailto:eddie.nijholt@gmail.com}{eddie.nijholt@gmail.com} }  }, Bob Rink\footnote{ \mbox{Department of Mathematics, VU University Amsterdam, The Netherlands, \href{mailto:b.w.rink@vu.nl}{b.w.rink@vu.nl} }} {} and Jan Sanders\footnote{ \mbox{Department of Mathematics, VU University Amsterdam, The Netherlands, \href{mailto:jan.sanders.a@gmail.com}{jan.sanders.a@gmail.com} }}}
\date{\today}

\begin{document}

\maketitle

\begin{abstract} 
We introduce a special subset of the graph of a homogeneous coupled cell network, called a projection block, and show that the network obtained from identifying this block to a single point can be used to understand the generic bifurcations of the original network. This technique is then used to describe the bifurcations in a generalized feed-forward network, in which the loop can contain more than one cell.
\end{abstract}

\section{Introduction}
In this paper we consider homogeneous coupled cell network vector fields. These are vector fields of the general form

\begin{equation}
\begin{split}
\dot{x}_1 = &f(x_{\sigma_1(1)}, \dots  x_{\sigma_{n}(1)}, \la)\\
\dot{x}_2 = &f(x_{\sigma_1(2)}, \dots  x_{\sigma_{n}(2)},\la)\\
&\vdots \\
\dot{x}_N = &f(x_{\sigma_1(N)}, \dots  x_{\sigma_{n}(N)},\la) \,,
\end{split} 
\end{equation}
where the variables $x_i$ are elements of the same vector space $V$, the $\sigma_i$ are functions from the set $\{1, \dots N\}$ to itself, and $f$ is a smooth map from an open set $V^n \times \Omega \subset V^n \times \R^p $ to $V$. We are interested in generic bifurcations of such vector fields, which means we vary the bifurcation parameter $\la \in \Omega \subset \R^p$, and ask about qualitative changes in for example the number of steady state points or periodic orbits. See also \cite{e1}, \cite{e2}, \cite{e3}, \cite{e4}, \cite{e5}, \cite{e6} and \cite{e7}. A known example of such a system is the so-called feed-forward network, studied in for example \cite{gs}, \cite{feedf}, \cite{feedf2} and \cite{feedf3}. It is given by the equations

\begin{equation}\label{ff1}
\begin{split}
\dot{x}_1 = &f(x_1, x_2, \dots  x_{n-1}, x_n, \la)\\
\dot{x}_2 = &f(x_2, x_3, \dots  x_{n}, x_n, \la)\\
&\vdots \\
\dot{x}_n = &f(x_{n}, x_n, \dots  x_{n}, x_n, \la) \,,
\end{split} 
\end{equation}
and can be depicted by the network of figure \ref{fig:pic1}.

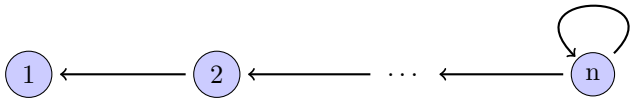
\begin{figure}[h]
\centering
\begin{tikzpicture}
\node[circle, draw, fill=blue!20] at (0,0) (a) {1} ; 
\node[circle, draw, fill=blue!20] at (2.5,0) (b) {2};
\node[] at (5,0) (c) {\dots} ;
\node[circle, draw, fill=blue!20] at (7.5,0) (d) {n}  ;
\draw[<-, thick, shorten >= 0.1 cm, shorten <= 0.1 cm] (a) edge (b) (b) edge (c) (c) edge (d) (d) edge [loop] (d);
\end{tikzpicture}\caption{A feed-forward network with $n$ cells.}\label{fig:pic1}
\end{figure}
More precisely, figure \ref{fig:pic1} shows the dependence of each cell as given by the second entry of $f$. For example, the equation for cell $1$ is given by $\dot{x}_1 = f(x_1, x_2, \dots  x_{n-1}, x_n, \la)$. Because $f$ is evaluated here at $x_2$ in the second entry, we say that the state of cell $1$ depends on the state of cell $2$ (in a way described by the second entry of $f$). Therefore,  the network of figure  \ref{fig:pic1} contains an arrow from cell $2$ to cell $1$. The other entries of $f$ are then obtained by concatenating the black arrows a fixed number of times, and by adding self-loops to describe the dependence of each cell on their own state by the first entry of $f$.\\
\indent In \cite{feedf} it is shown that in the case of $V = \R$ and $\Omega \subset \R$ the system \eqref{ff1} has generically one of two steady state bifurcations from a fully synchronous point, i.e. a point with $x_1  = \dots = x_n$. These bifurcations are a fully synchronous saddle node bifurcation and a synchrony breaking bifurcation. This latter bifurcation has, in addition to a fully synchronous branch, $n-1$ branches scaling as $|\la|^{l_1}$ to $|\la|^{l_{n-1}}$, where we have set $l_i := \frac{1}{2^{i-1}}$. \\
\indent Let us also look at the following network

\begin{equation}\label{ff2}
\begin{split}
\dot{x}_1 = &f(x_1, x_2, x_3, x_4, \la)\\
\dot{x}_2 = &f(x_2, x_3, x_4, x_3, \la)\\
\dot{x}_3 = &f(x_3, x_4, x_3, x_4, \la)\\
\dot{x}_4 = &f(x_4, x_3, x_4, x_3, \la) \quad,
\end{split} 
\end{equation}
depicted by figure \ref{fig:pic2}. Again, we have only shown the dependence of each cell through the second entry of the response function $f$. The third and fourth correspond to concatenating the given arrows two and three times. 

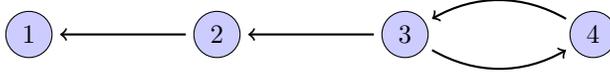
\begin{figure}[h]
\centering
\begin{tikzpicture}
\node[circle, draw, fill=blue!20] at (0,0) (a) {1} ; 
\node[circle, draw, fill=blue!20] at (2.5,0) (b) {2};
\node[circle, draw, fill=blue!20] at (5,0) (c) {3} ;
\node[circle, draw, fill=blue!20] at (7.5,0) (d) {4}  ;
\draw[<-, thick, shorten >= 0.1 cm, shorten <= 0.1 cm] (a) edge (b) (b) edge (c) (c) edge[bend left] (d) (d) edge[bend left]  (c);
\end{tikzpicture}\caption{The network of equation \ref{ff2}.}\label{fig:pic2}
\end{figure}

The network \eqref{ff2} is similar to the feed-forward network \eqref{ff1}. However, in a feed-forward network there is one cell whose state depends only on itself and that influences all the others, whereas in the network of \eqref{ff2} there are two cells that are only influenced by themselves and by each other, but that in their turn feed into the other cells. Nevertheless, by identifying the cells $3$ and $4$ one obtains the $3$-cell feed-forward network back:

\begin{equation}\label{ff3}
\begin{split}
\dot{x}_1 = &g(x_1, x_2, x_3,\la)\\
\dot{x}_2 = &g(x_2, x_3, x_3,  \la)\\
\dot{x}_3 = &g(x_3, x_3, x_3, \la) \quad,
\end{split} 
\end{equation}
for $g(x_1, x_2, x_3,\la) := f(x_1,x_2,x_3,x_3,\la)$. Note that any (smooth) map \\ \noindent $g: V^3 \times \Omega \rightarrow V$  can be obtained by restricting some (smooth) map \\ \noindent $f: V^4 \times \Omega \rightarrow V$ to the space $\{x_3 = x_4\}$. As it turns out, the generic steady state bifurcations of \eqref{ff3} are generic in \eqref{ff2} as well. We will show that this is a consequence of a more general theorem, relating the bifurcations in a network with a set of cells that only influence each other (such as the cells $3$ and $4$ in \eqref{ff2}) to that of the same network with these cells identified (i.e. the system \eqref{ff3}). More specifically, a projection block will be a set of cells in a network that only feel each other and that influence every other cell, in a way to be made precise later on. The main theorem of this paper can be roughly summarized as the following, see theorem \ref{main} and corollary \ref{main2}.

\begin{thr}\label{main0}
Let $B$ be a projection block in a homogeneous coupled cell network $N$ and let $N_P$ denote the network equal to $N$ with the block $B$ identified to a point. We may naturally identify any network vector field for the network $N_P$ as a network vector field for the network $N$ restricted to some invariant subspace. Suppose a smooth network vector field $\gamma_f$ for the network $N_P$ has a fully synchronous steady state point $x_0$. Suppose furthermore that the center subspace of the linearization $D\gamma_f(x_0)$ at $x_0$ contains no non-trivial fully synchronous points. Let us denote a center manifold of this system by $M_c$. 

Then in the set of smooth network vector fields for $N$ that, when restricted to the phase space of $N_P$ are equal to $\gamma_f$, there exists an open and dense set of vector fields with center manifold equal to $M_c$ as well. In particular, any local bifurcation that occurs in the $N_p$-system occurs in all these $N$-systems, without additional branches of bounded solutions.
\end{thr}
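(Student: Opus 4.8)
The plan is to exploit the algebraic structure of homogeneous networks: the space of admissible vector fields for $N$ is a module over a ring of "network maps," and the projection block $B$ corresponds to an idempotent-type restriction. Concretely, if $V^{N}$ denotes the total phase space, the invariant subspace $W$ on which any $N_P$-field lives is the synchrony subspace forcing equality of all coordinates in $B$; write $P\colon V^{N}\to W$ for the associated projection, which exists precisely because $B$ only feeds itself and feeds everything else (this is what lets the block be collapsed consistently). First I would make precise the claim that an $N_P$-vector field $\gamma_f$, viewed on $W$, extends to $N$-vector fields, and parametrize all such extensions: an $N$-field $\Gamma$ restricting to $\gamma_f$ on $W$ differs from a fixed reference extension by an admissible field vanishing on $W$, i.e.\ a field in the submodule $I\cdot(\text{admissible fields})$ where $I$ is the ideal of functions vanishing on $W$. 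I would record that $W$ is invariant for every admissible $N$-field, so $x_0\in W$ is a steady state of every extension $\Gamma$, and $D\Gamma(x_0)$ leaves $W$ invariant with $D\Gamma(x_0)|_W = D\gamma_f(x_0)$.

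The core of the argument is a block-triangularization of $D\Gamma(x_0)$ with respect to the splitting $V^{N}=W\oplus W^{\perp}$ (or a complementary invariant-ish complement coming from the network structure). Because $\gamma_f$ has no non-trivial fully synchronous points in its center subspace, the fully synchronous diagonal inside $W$ meets $\ker D\gamma_f(x_0)$ trivially; I would use this together with the projection-block property to show that the "transverse" block $D\Gamma(x_0)|_{W^{\perp}}$ can be made hyperbolic by a generic choice of the extension. The key calculation: the transverse block depends, to leading order, on the first derivatives of the extending admissible field along $W^{\perp}$, and the projection-block combinatorics guarantee that these derivatives can be chosen independently of $\gamma_f$ — i.e.\ the map from extensions to transverse linearizations is a submersion onto a set of matrices whose hyperbolic elements are open and dense. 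Hence on an open dense set of extensions, the center subspace of $D\Gamma(x_0)$ equals the center subspace of $D\gamma_f(x_0)$, sitting inside $W$.

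Once the center subspaces agree, the center manifold reduction does the rest. For any such generic extension $\Gamma$, the center manifold $M_c$ is tangent to the center subspace $E^c\subset W$; since $W$ is invariant and contains $E^c$, I would invoke uniqueness/attractivity properties of center manifolds relative to invariant subspaces to conclude that $M_c$ can be taken inside $W$, and there it coincides with the center manifold of $\gamma_f$ computed as an $N_P$-system. Therefore the reduced vector field on $M_c$ is exactly that of $N_P$, so every local bifurcation of the $N_P$-system is reproduced verbatim in the $N$-system, and the hyperbolic transverse directions contribute no extra branches of bounded solutions. Openness and density of the good set of extensions follows from hyperbolicity being an open dense condition on the transverse block, combined with the submersion statement above; I would also note stability of $M_c$ under the $C^1$-small perturbations implicit in "open."

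The step I expect to be the main obstacle is the submersion claim — showing that the transverse linearization block genuinely ranges over a large enough set of matrices as the extension varies within admissible $N$-fields restricting to $\gamma_f$ on $W$. This is where the precise definition of a projection block must be used in full strength: one needs that collapsing $B$ does not "use up" all the freedom in the coupling structure, so that the off-diagonal and transverse-diagonal entries of $D\Gamma(x_0)$ remain free parameters. I would isolate this as a lemma about the structure of the admissible module modulo the vanishing ideal $I$, proving it by writing out the monomial/word basis of network maps and checking that those supported away from $W$ surject onto $\homc(W^{\perp})$ after evaluation at $x_0$; the homogeneity of the network (a single response function $f$) is what makes this bookkeeping uniform across cells.
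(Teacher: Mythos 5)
The gap is precisely the lemma you flag as the main obstacle, and it cannot be repaired in the form you propose. At a fully synchronous point the linearization of an admissible vector field is a linear combination of the finitely many ``arrow'' matrices $A_{\sigma}$, so the linearizations of admissible fields vanishing on your $W$ (the quotient phase space $\syn_P$) form a very low-dimensional, rigidly constrained family; already by a dimension count they cannot surject onto $\homc(W^{\perp})$ (e.g.\ in $R_{n,k}$ the whole space of admissible linear maps has dimension $n+k$ while the transverse block lives in an $(n-1)^2$-dimensional space), so the ``word-basis bookkeeping'' surjectivity cannot hold. The deeper problem is that the weaker statement you actually need -- that hyperbolicity of the transverse block is open and dense \emph{within the constrained image} -- is essentially the theorem itself, and it fails without further input: part of the transverse spectrum is coupled to $D\gamma_f(x_0)$ and can never be moved by any admissible extension. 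Concretely, the projection block yields an idempotent $\iota\in\Sigma$ (lemma \ref{idemp}) whose equivariant projection $B_{\iota}$ (lemma \ref{idemproj}) splits the fundamental phase space as $V^n=\ker B_{\iota}\oplus\im B_{\iota}$ with $\im B_{\iota}\cap\syn_{\pi_P}=\syn_0$ (theorem \ref{main}); the indecomposable components of $\im B_{\iota}$ meet the quotient phase space only in the fully synchronous space, so fully synchronous eigenvalues of $D\gamma_f(x_0)$ reappear in every extension, in general with generalized eigendirections sticking out transversally to $\syn_P$, and no extension fixing $\gamma_f$ can remove them. This is exactly why the hypothesis $W_c\cap\syn_0=\{0\}$ is imposed; if your submersion claim were true, that hypothesis would be superfluous, since you could always push the transverse block off the imaginary axis irrespective of $\gamma_f$. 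So your argument assumes away the central difficulty: identifying which transverse eigenvalues are forced and which are generically movable.

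The paper resolves this by representation theory rather than by free perturbation of matrix entries. Everything is lifted to the fundamental network (the regular representation of $\Sigma$), where lemma \ref{ext} supplies equivariant extensions and the equivariant center manifold theorem of \cite{new} gives a center manifold that maps synchrony spaces to themselves -- note that on the bare phase space of $N$ your complement $W^{\perp}$ is in general not invariant and no equivariant complement exists, which is what the monoid formalism is for. Genericity in corollary \ref{main2} is then obtained by adding small multiples of equivariant projections onto the indecomposable components of the lifted center subspace that intersect $\syn_{\pi_P}$ trivially: such perturbations automatically vanish on $\syn_{\pi_P}$ (equivariant maps preserve robust synchrony spaces), hence do not change $\gamma_f$, and they remove exactly the removable transverse center directions. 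The remaining components meet $\syn_{\pi_P}$ non-trivially and, by theorem \ref{submon} together with $\im B_{\iota}\cap\syn_{\pi_P}=\syn_0$ and the hypothesis on $W_c$, are isomorphic to subrepresentations of $\ker B_{\iota}$, for which theorem \ref{main} gives $\ker B_{\iota}\cap\syn_{N,p}=\ker B_{\iota}\cap\syn_{N_P,[p]}$; restricting the equivariant center manifold then shows it coincides with $M_c$ on the $N$-phase space. Your final step (center subspace inside an invariant subspace, hyperbolic transverse directions contributing no bounded solutions) is fine in spirit, but the heart of the proof is the structure theorems \ref{main} and \ref{submon}, which your proposal replaces by a surjectivity statement that is false.
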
 
\noindent This result relies mainly on an investigation of the possible center subspaces of a network vector field. It turns out these can be described in terms of the invariant subspaces of a certain monoid-representation, which is the main theme of this paper.\\
\indent The rest of this paper is set up as follows. In section $2$ we introduce some of the techniques we will be using in subsequent sections, most notably that of a fundamental network and that of center manifold reduction for homogeneous coupled cell networks. Likewise, section $3$ serves to introduce the details of monoid-representations needed throughout this paper. In section $4$ we introduce quotient monoids, which we relate to quotient networks in section $5$. In section $6$ we introduce the notion of a projection block and formulate and prove the main result of this paper. In section $7$ we then work out the machinery we have developed on a generalization of a feed-forward network.

\section{Preliminaries}
Recall that a homogeneous coupled cell network is a system of the general form

\begin{equation}\label{nn}
\begin{split}
\dot{x}_1 = &f(x_{\sigma_1(1)}, \dots  x_{\sigma_{n}(1)})\\
\dot{x}_2 = &f(x_{\sigma_1(2)}, \dots  x_{\sigma_{n}(2)})\\
&\vdots \\
\dot{x}_N = &f(x_{\sigma_1(N)}, \dots  x_{\sigma_{n}(N)}) \quad,
\end{split} 
\end{equation}
The underlying network structure $N$ can be described by the set of nodes $C:= \{1, \dots N\}$ and the set of interaction functions $\Sigma = \{\sigma_1, \dots \sigma_n \}$, $\sigma_i: C \rightarrow C$. Whereas $\Sigma$ a priori need not satisfy any additional conditions, this often means that the set of systems of the form \eqref{nn} is too intractable to work with. For example, both the composition and the Lie-bracket of two vector fields of the form \eqref{nn} need not be of this form any longer. We will therefore slightly enlarge the class of vector fields we consider, by enlarging the set $\Sigma$ to include the identity $\id: C \rightarrow C$ and all compositions of two or more functions that appear in $\Sigma$. Note that this new class of network vector fields includes all the original ones, as the response function $f$ may only depend formally on the new variables. This setup is described in more detail in  \cite{new}, \cite{norm}, \cite{repr} and \cite{feedf}, where it is shown that this larger class of vector fields is closed under taking compositions and Lie-brackets.\\
By the above discussion, we will henceforth always assume that $\Sigma$ is closed under multiplication and contains the identity $\id: C \rightarrow C$. This means that $\Sigma$ is a monoid. In particular, we may construct the regular representation $(V^n,A_{\Sigma})$ of $\Sigma$. Here, the action of $\Sigma$ is given by $(A_{\sigma}X)_{\tau} = X_{\tau \circ \sigma}$ for $\sigma, \tau \in \Sigma$ and we identify $V^n$ with $\bigoplus_{\sigma \in \Sigma} V$. It can be seen that indeed $A_{\sigma} \circ A_{\tau} =A_{\sigma \circ \tau}$ for all $\sigma, \tau \in \Sigma$ and that $A_{\id} = \id|_{V^n}$. As it turns out, the equivariant vector fields on $(V^n,A_{\Sigma})$ are exactly the coupled cell network vector fields $\Gamma_f$ given by

\begin{equation}\label{funref}
\begin{split}
\Gamma_f(X)_{\sigma_1} = &f(X_{\sigma_1\circ \sigma_1}, \dots  X_{\sigma_n\circ \sigma_1})\\
\Gamma_f(X)_{\sigma_2} = &f(X_{\sigma_1\circ \sigma_2}, \dots  X_{\sigma_n\circ \sigma_2})\\
&\vdots \\
\Gamma_f(X)_{\sigma_n} = &f(x_{\sigma_1\circ \sigma_n}, \dots  X_{\sigma_n\circ \sigma_n}) \quad.
\end{split} 
\end{equation}
We will explain monoid-representations, including the regular representation in more detail in the next section. The motivation for considering the regular representation is as follows. Given a network vector field \eqref{nn} we call a subspace of the phase space a \textit{synchrony space} when it is given by equations of the form $x_i = x_j$ for certain nodes $i$ and $j$. In other words, a synchrony space is just a poly-diagonal subspace. A synchrony space is then called \textit{robust} if it is a flow-invariant space for any network vector field \eqref{nn}. Since we have defined network vector fields $\Gamma_f$ on $(V^n,A_{\Sigma})$, we may equally well speak of (robust) synchrony spaces on this latter space. It follows that a synchrony space of $(V^n,A_{\Sigma})$ is robust if and only if it is flow-invariant for any equivariant vector field. Perhaps counter-intuitive is that such a robust synchrony space need in general not be respected by the symmetries $A_{\sigma}$ of $(V^n,A_{\Sigma})$. For any cell $p \in C$, the synchrony space \\
$\syn_{N,p}:= \{X_{\sigma_i} = X_{\sigma_j} \text{ if } \sigma_i(p) = \sigma_j(p)\} \subset (V^n,A_{\Sigma})$ is robust, hence flow-invariant for any vector field $\Gamma_f$. Moreover, if the network $N$ has a cell $p \in C$ such that $\{\sigma(p) : \sigma \in \Sigma\} = C$, then the vector fields $\Gamma_f|_{\syn_{N,p}}$ are exactly the coupled cell network vector fields of \eqref{nn}. We therefore say that the network of \eqref{funref} is the \textit{fundamental network} of the network $N$. Similarly, we call $\Gamma_f$ the \textit{fundamental network vector field} of \eqref{nn}. We say that $p$ is a \textit{fully dependent cell} of the network $N$ if the condition  $\{\sigma(p) : \sigma \in \Sigma\} = C$ is satisfied. Unless stated otherwise, all of the networks in this article will have such a cell, so that they can be realized as sub-systems of their fundamental network vector fields. More on homogeneous coupled cell networks can be found in for example \cite{new}, \cite{fibr}, \cite{norm}, \cite{repr}, \cite{feedf}, \cite{gs} and \cite{ld}. \\
In \cite{new}, the authors have developed a center manifold theorem for homogeneous coupled cell networks around a fully synchronous point. The first result is that for a fundamental network vector field there always exists a local center manifold that is invariant under the monoid-symmetries. The second is one that classifies all the vector fields one might obtain by reducing $\Gamma_f$ to its center manifold, i.e. the \textit{reduced vector fields}. To describe these, we say that an $A_{\Sigma}$-invariant subspace $W$ of $(V^n,A_{\Sigma})$ is \textit{complementable} if there exists an $A_{\Sigma}$-invariant subspace $W'$ such that $V^n = W \oplus W'$. Whereas any invariant space under the linear action of a finite group always has an invariant complement, this is in general not true for the representation of a finite monoid. The result of \cite{new} is that the possible reduced vector fields of the fundamental network are exactly those that are conjugate to an equivariant vector field on a complementable invariant space of $(V^n,A_{\Sigma})$. Furthermore, such a conjugacy can always be found in such a way that it preserves the robust synchrony spaces. Therefore, the reduced vector fields of \eqref{nn} are exactly those of $\Gamma_f$ restricted to $\syn_{N,p}$.


\newpage

\section{Representation theory of monoids}
In this section we will briefly explain the definitions and results about monoid representations relevant to this paper.  Recall that a monoid is a set $\Sigma$ together with an associative multiplication and a unit $e \in \Sigma$. In other words, there exists a map $\circ: \Sigma \times \Sigma \rightarrow \Sigma$ such that $(a \circ b) \circ c = a \circ (b \circ c)$ for all $a,b,c \in \Sigma$ and $a \circ e = e \circ a = a$ for all $a \in \Sigma$.  We will furthermore always assume in this paper that $\Sigma$ is finite, i.e. contains only finitely many elements.

\begin{defi}
1. Let $\Sigma$ be a finite monoid and let $W$ be a finite dimensional real vector space. A \textit{representation of $\Sigma$ on W} is a map $A$ from $\Sigma$ to $\mathcal{L}(W,W)$ (the space of linear maps from $W$ to itself) such that

\begin{itemize}
\item $A( \sigma ) A(\tau )  = A(\sigma \circ \tau)$  for all $\sigma, \tau \in \Sigma$ 
\item $A(e) = \id_W$ .
\end{itemize}
We will often denote $A(\sigma)$ by $A_{\sigma}$ for $\sigma \in \Sigma$ and will use the notation $(W,A_{\Sigma})$ to denote a representation of $\Sigma$ on $W$. \\
\noindent 2. Given a representation $(W,A_{\Sigma})$, a linear subspace $U \subset W$ is called \textit{invariant} if $A_{\sigma}u \in U$ for all $u \in U$ and $\sigma \in \Sigma$. We will call the invariant space $U$ \textit{complementable} if there exists an invariant subspace $U' \subset W$ such that $W = U \oplus U'$. \\
\noindent 3. Given two representations $(W,A_{\Sigma})$ and $(W',A'_{\Sigma})$ of the monoid $\Sigma$, we call a map $f: W \rightarrow W'$ \textit{equivariant} if $f \circ A_{\sigma} = A'_{\sigma} \circ f$ for all $\sigma \in \Sigma$.
\end{defi}

\noindent Given a vector space $V$ and a monoid $\Sigma$, we can construct the representation $(\bigoplus_{\sigma \in \Sigma}V, A_{\sigma})$ as follows: A vector $X$ in $\bigoplus_{\sigma \in \Sigma}V$ can uniquely be written as $X =\sum_{\sigma \in \Sigma} X_{\sigma}$ for $X_{\sigma} \in V$. As such, we define the action of $\Sigma$ on $\bigoplus_{\sigma \in \Sigma}V$ by $(A_{\sigma}X)_{\tau} = X_{\tau \circ \sigma}$ for all $\tau, \sigma \in \Sigma$ and $X \in \bigoplus_{\sigma \in \Sigma}V$. It can easily be verified that this indeed defines a representation. If $\#\Sigma = n$ then we will denote this representation by $(V^n, A_{\Sigma})$ and we will refer to it as the \textit{regular representation} of the monoid $\Sigma$. As we have noted in section $2$ the equivariant maps from $(V^n, A_{\Sigma})$ to itself are exactly the admissible vector fields of the fundamental network of a coupled cell network with monoid $\Sigma$. \\

\noindent If we are given a linear equivariant map $B$ from a representation $(W,A_{\Sigma})$ to itself, then it is not hard to see that $\ker(B)$ and $\im(B)$ are examples of invariant spaces in $(W,A_{\Sigma})$. Likewise for all $\mu \in \R$ and $\lambda \in \C \backslash \R$ the span of the eigenvectors of $B$, $\ker(B - \mu \id)$ and $\ker((B-\la \id)(B-\bar{\la} \id))$ are examples of invariant spaces. In general none of the above examples are complementable though. Other examples of invariant spaces are the generalized eigenspaces of $B$, $W_{B,\la} := \ker((B-\la \id)^n(B-\bar{\la} \id)^n)$ for $\la \in \C$ and with $n := \Dim(W)$. These examples are complementable, as we have the direct sum decomposition

\begin{equation} \label{eigendecom}
W = \bigoplus_{\la \in EV(B)} W_{B,\la} \, ,
\end{equation}
where $EV(B) \subset \C$ denotes the set of eigenvalues if $B$ (containing only one of each complex pair). We also note that any decomposition $W = U \oplus V$ into invariant spaces gives rise to a projection with image $U$ and kernel $V$, which is readily seen to be an equivariant map. Conversely, any equivariant projection gives rise to a decomposition into its image and its kernel, both of which are invariant and therefore complementable. \\

\noindent 
Another important notion for the representation of monoids is that of an \textit{indecomposable} representation. 

\begin{defi}
A representation $(U,A_{\Sigma})$ is called \textit{indecomposable} when it holds that $U \not= \{0\}$ and that $U$ cannot be written in a non-trivial way as the direct sum of two invariant spaces.
\end{defi}
\noindent Note that indecomposable representations can still contain non-trivial invariant spaces, albeit without invariant complements. By iteratively decomposing any representation $(W,A_{\Sigma})$ into invariant spaces, one can see that any representation can be written as the direct sum of indecomposable representations (this process terminates by the fact that $W$ is finite dimensional, and must do so in a decomposition into indecomposable representations). It is shown in \cite{repr} that any such decomposition is furthermore unique. In other words, given two decompositions of $(W,A_{\Sigma})$ into indecomposable representations

\begin{equation}
W = \bigoplus_{i=1}^p W_i = \bigoplus_{j=1}^q W'_j \,,
\end{equation}
it follows that $p = q$ and that $W_i$ is isomorphic to $W'_i$ for all $i \in \{1, \dots p\}$, possibly after reordering (we say that two representations are isomorphic when there exists an invertible linear equivariant map between them, from which it follows that the inverse is a linear equivariant map as well).\\
\indent Given an equivariant linear map $B$ from an indecomposable representation $(U,A_{\Sigma})$ to itself, it follows from the decomposition \eqref{eigendecom} that $B$ necessarily has either one real eigenvalue or one pair of complex conjugate eigenvalues. Consequently, $B$ is either nilpotent or invertible. This result is often called Schur's lemma, after an analogous (but stronger) result for the representation of finite groups over fields of characteristic $0$. It can then be seen that in the case of an indecomposable representation the sum of two equivariant nilpotent maps, as well as the composition of an equivariant nilpotent map with any equivariant map is again nilpotent. In other words, if we denote by $\homc_{\Sigma}(U)$ the linear equivariant maps from  $(U,A_{\Sigma})$ to itself, and by $\nil_{\Sigma}(U) \subset \homc_{\Sigma}(U)$ the ones that are nilpotent, then $\nil_{\Sigma}(U)$ is a two-sided ideal in $\homc_{\Sigma}(U)$. Consequently, the quotient ring $\homc_{\Sigma}(U)/\nil_{\Sigma}(U)$ is a finite dimensional  division algebra over $\R$. This has important consequences, as a theorem by Frobenius states that the only finite dimensional  division algebras over $\R$ are isomorphic to either $\R$, $\C$ or $\mathbb{H}$ (the quaternions). As a result, we get the following classification of indecomposable representations. 

\begin{defi}
The indecomposable representation $(U,A_{\Sigma})$ is called
\begin{itemize}
\item of \textit{real type} if $\homc_{\Sigma}(U)/\nil_{\Sigma}(U) \cong \R$,
\item of \textit{complex type} if $\homc_{\Sigma}(U)/\nil_{\Sigma}(U) \cong \C$ or
\item of \textit{quaternionic type} if $\homc_{\Sigma}(U)/\nil_{\Sigma}(U) \cong \mathbb{H}$.
\end{itemize}
\end{defi}
As a last remark we give the following theorem. Its proof can be found in for example \cite{repr}.

\begin{thr}
Let $B$ be an equivariant linear map from the indecomposable representation $(U,A_{\Sigma})$ to the indecomposable representation $(U',A'_{\Sigma})$, and let $B'$ be an equivariant linear map from $(U',A'_{\Sigma})$ to  $(U,A_{\Sigma})$. If $B \circ B'$ is invertible, then both $B$ and $B'$ are isomorphisms of representations. In particular, if $(U,A_{\Sigma})$ is not isomorphic to $(U',A'_{\Sigma})$ then $B \circ B'$ is nilpotent.
\end{thr}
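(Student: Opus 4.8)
The plan is to reduce the statement to the previous structural theorem by exploiting the decomposition of $(U,A_\Sigma)$ and $(U',A'_\Sigma)$ into indecomposables — but since both are \emph{already} indecomposable, the work is to see what $B\circ B'$ invertible forces. First I would record the trivial half: if $B$ and $B'$ are both isomorphisms of representations then $B\circ B'$ is visibly invertible, so only the converse needs argument. For the converse, suppose $C:=B\circ B'\in\homc_\Sigma(U')$ is invertible. The key observation is that $C^{-1}$ is again equivariant: from $C\circ A'_\sigma = A'_\sigma\circ C$ one gets $A'_\sigma\circ C^{-1} = C^{-1}\circ A'_\sigma$ by conjugating. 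Hence $C^{-1}\circ B$ is an equivariant map $U\to U'$ and $(C^{-1}\circ B)\circ B' = \id_{U'}$, so $B'$ has a left inverse in the equivariant category and $B$ has a right inverse; in particular $B'$ is injective and $B$ is surjective.

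Next I would close the loop on the other side by looking at $B'\circ B\in\homc_\Sigma(U)$. Since $(U,A_\Sigma)$ is indecomposable, Schur's lemma as stated in the excerpt applies: $B'\circ B$ is either nilpotent or invertible. It cannot be nilpotent, because $(B'\circ B)^k = B'\circ(B\circ B')^{k-1}\circ B$ and $B\circ B' = C$ is invertible, so $(B'\circ B)^k\neq 0$ for every $k$ (one can see this concretely: if $(B'\circ B)^k=0$ then applying $B$ on the left and $B'$ on the right gives $C^{k+1}=0$, contradicting invertibility of $C$). Therefore $B'\circ B$ is invertible in $\homc_\Sigma(U)$, and as above its inverse is equivariant. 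Now $B$ has both an equivariant right inverse (namely $B'\circ C^{-1}$) and an equivariant left inverse (namely $(B'\circ B)^{-1}\circ B'$), so $B$ is an equivariant linear isomorphism $U\to U'$; its inverse is automatically equivariant. The same argument, or simply $B' = B^{-1}\circ C$, shows $B'$ is an isomorphism of representations as well.

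Finally, the last sentence of the statement is the contrapositive: if $(U,A_\Sigma)\not\cong(U',A'_\Sigma)$ then no equivariant isomorphism between them exists, so by what we just proved $B\circ B'$ is not invertible; since $(U',A'_\Sigma)$ is indecomposable, Schur's lemma forces $B\circ B'$ to be nilpotent. I expect the only genuinely delicate point to be the observation that invertibility of $B\circ B'$ propagates to $B'\circ B$ — i.e. ruling out the nilpotent alternative in Schur's dichotomy — and the bookkeeping that inverses taken in $\mathcal L(U,U)$ of equivariant maps remain equivariant; everything else is formal manipulation of one-sided inverses. No center-manifold or network machinery is needed here, only the monoid-representation facts recalled earlier in this section.
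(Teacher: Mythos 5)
Your argument is correct and complete. Note that the paper itself does not prove this theorem at all --- it simply refers the reader to the reference \cite{repr} --- so the relevant comparison is with the standard argument rather than with anything in this text. Your route is exactly in the spirit of the tools recalled in this section: you only use that an equivariant endomorphism of an indecomposable representation is nilpotent or invertible (Schur's dichotomy as stated here), that the inverse of an invertible equivariant map is again equivariant, and the ``sandwich'' identity $B\circ(B'\circ B)^k\circ B'=(B\circ B')^{k+1}$, which is indeed the right way to rule out the nilpotent alternative for $B'\circ B$; together with the one-sided inverses $B'\circ C^{-1}$ and $(B'\circ B)^{-1}\circ B'$ this makes $B$, and then $B'=B^{-1}\circ C$, isomorphisms, and the final sentence of the statement is the contrapositive combined with the dichotomy applied to $B\circ B'\in\homc_{\Sigma}(U')$. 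An equally common variant, which you may find in \cite{repr}, is to observe that $P:=B'\circ C^{-1}\circ B$ is an equivariant idempotent on $U$ with $B\circ P\circ B'=C\neq 0$, so by indecomposability of $U$ the decomposition $U=\im(P)\oplus\ker(P)$ forces $P=\id_U$, which yields the missing injectivity of $B$ and surjectivity of $B'$ directly; this buys a proof that does not invoke the nilpotent-or-invertible dichotomy, only the fact that equivariant idempotents split the representation, but your version is no less rigorous.
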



\section{Quotient monoids and the regular representation}
In this section we formulate and prove a result that relates the regular representation of a monoid to that of a so-called quotient monoid. This result will be an important ingredient for the main theorem \ref{main0} as formulated in the introduction, where it is used to relate the generic bifurcations of a network to that of a specific quotient network.

\begin{defi}[Homomorphisms of monoids and quotient monoids]
Let $\Sigma$ and $T$ be monoids. A function $\pi: \Sigma \rightarrow T$ is called a \textit{homomorphism of monoids} when it satisfies

\begin{itemize}
\item $\pi(e_{\Sigma}) = e_T$ for the units $e_{\Sigma} \in \Sigma$ and $e_T \in T$,
\item $\pi(\sigma \cdot \sigma') = \pi(\sigma) \cdot \pi(\sigma')$ for all $\sigma, \sigma' \in \Sigma$, where multiplication is to be understood in $\Sigma$ respectively $T$.
\end{itemize}
We say that $T$ is a \textit{quotient monoid} of $\Sigma$ if there exists a surjective homomorphism of monoids $\pi: \Sigma \rightarrow T$. 
\end{defi}

\begin{remk}\label{unit}
Given a surjective function $\pi$ between monoids $\Sigma$ and $T$ such that $\pi(\sigma \cdot \sigma') = \pi(\sigma) \cdot \pi(\sigma')$ for all $\sigma, \sigma' \in \Sigma$, it follows immediately that $\pi(e_{\Sigma}) = e_T$. Namely, we have that $\pi(e_{\Sigma})\cdot\pi(\sigma) = \pi(e_{\Sigma}\cdot \sigma) = \pi(\sigma)$ and likewise that $\pi(\sigma)\cdot \pi(e_{\Sigma}) = \pi(\sigma)$ for all $\sigma \in \Sigma$. By surjectivity of $\pi$ we have that $\{\pi(\sigma) : \sigma \in \Sigma\} = T$ and hence we see that $\pi(e_{\Sigma})$ is a unit element in $T$. Note furthermore that an element that is a right unit or a left unit (and specifically both) in a monoid is necessarily the unit of this monoid, as its right respectively left product with the unit would otherwise be ill-defined. \hfill $\triangle$
\end{remk}

\begin{thr}\label{inclu}
Let $\pi: \Sigma \rightarrow T$ be a surjective homomorphism of monoids, so that $T$ is a quotient monoid of the finite monoid  $\Sigma$. Let $(V^m, A_{T})$ and $(V^n, A_{\Sigma})$ be their regular representations, respectively. The synchrony space $\syn_{\pi} := \{X_{\sigma} = X_{\sigma'} \ifo \pi(\sigma) = \pi(\sigma') \} \subset (V^n, A_{\Sigma})$ is robust. Furthermore, it is an invariant space of $(V^n, A_{\Sigma})$ on which the action of $\Sigma$ depends only on the images $\pi(\sigma)$ for  $\sigma \in \Sigma$. In particular, this invariant synchrony space can be seen as a representation space of $T$ and as such it is in fact isomorphic to the regular representation $(V^m, A_{T})$ of $T$.
\end{thr}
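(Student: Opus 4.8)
The plan is to verify each claim of the theorem in turn, building the isomorphism explicitly at the end. First I would show that $\syn_\pi$ is a robust synchrony space. Recall from Section~2 that for a fully dependent cell $p$ the space $\syn_{N,p}$ is robust; here I would instead argue directly that $\syn_\pi$ is flow-invariant for every fundamental network vector field $\Gamma_f$ on $(V^n,A_\Sigma)$. The key observation is that if $X \in \syn_\pi$ and $\pi(\sigma) = \pi(\sigma')$, then for every generator $\sigma_i$ we have $\pi(\sigma_i \circ \sigma) = \pi(\sigma_i)\pi(\sigma) = \pi(\sigma_i)\pi(\sigma') = \pi(\sigma_i \circ \sigma')$, so $X_{\sigma_i \circ \sigma} = X_{\sigma_i \circ \sigma'}$; hence the defining equations of $\Gamma_f$ give $\Gamma_f(X)_\sigma = \Gamma_f(X)_{\sigma'}$, i.e. $\Gamma_f(X) \in \syn_\pi$. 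The same computation with $\sigma_i$ replaced by an arbitrary $\tau \in \Sigma$ shows that $\syn_\pi$ is invariant under each $A_\tau$, since $(A_\tau X)_\sigma = X_{\sigma\circ\tau}$ and $\pi(\sigma) = \pi(\sigma')$ implies $\pi(\sigma\circ\tau) = \pi(\sigma'\circ\tau)$.

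Next I would make precise the statement that ``the action of $\Sigma$ on $\syn_\pi$ depends only on the images $\pi(\sigma)$''. For $X \in \syn_\pi$, I claim $A_\tau X = A_{\tau'} X$ whenever $\pi(\tau) = \pi(\tau')$: indeed $(A_\tau X)_\sigma = X_{\sigma\circ\tau}$ and $(A_{\tau'}X)_\sigma = X_{\sigma\circ\tau'}$, and $\pi(\sigma\circ\tau) = \pi(\sigma)\pi(\tau) = \pi(\sigma)\pi(\tau') = \pi(\sigma\circ\tau')$ forces these two components to agree because $X \in \syn_\pi$. Therefore the assignment $t \mapsto A_\sigma|_{\syn_\pi}$ for any $\sigma$ with $\pi(\sigma) = t$ is well defined, and using surjectivity of $\pi$ together with $A_\sigma A_{\sigma'} = A_{\sigma\circ\sigma'}$ one checks it is a monoid homomorphism from $T$ to $\mathcal L(\syn_\pi,\syn_\pi)$; thus $\syn_\pi$ carries a genuine $T$-representation structure.

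Finally I would exhibit the isomorphism with $(V^m, A_T)$. Since $\pi$ is surjective, the fibers $\pi^{-1}(t)$ for $t \in T$ partition $\Sigma$ into $m = \#T$ blocks, and a point of $\syn_\pi$ is exactly a vector $X \in V^n$ that is constant on each fiber; so the map $\Phi: V^m \to \syn_\pi$ sending $Y = (Y_t)_{t\in T}$ to the vector $X$ with $X_\sigma := Y_{\pi(\sigma)}$ is a linear isomorphism. It remains to check equivariance: $(\Phi(A_t Y))_\sigma = (A_t Y)_{\pi(\sigma)} = Y_{\pi(\sigma)\circ t}$, while picking any $\sigma_0$ with $\pi(\sigma_0) = t$ we get $(A_{\sigma_0}\Phi(Y))_\sigma = (\Phi(Y))_{\sigma\circ\sigma_0} = Y_{\pi(\sigma\circ\sigma_0)} = Y_{\pi(\sigma)\circ t}$, so $\Phi \circ A_t = A_{\sigma_0}|_{\syn_\pi} \circ \Phi$, which by the previous paragraph is precisely the action of $t$ on $\syn_\pi$. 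Hence $\Phi$ is an isomorphism of $T$-representations.

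I do not expect any single step to be a serious obstacle — the argument is essentially a sequence of bookkeeping checks about how $\pi$ interacts with the left-multiplication action defining the regular representation. The one point requiring mild care is the direction of composition: the regular representation uses $(A_\sigma X)_\tau = X_{\tau\circ\sigma}$, so it is right multiplication in the index that matters, and one must consistently use $\pi(\tau\circ\sigma) = \pi(\tau)\pi(\sigma)$ rather than the other order; getting this bookkeeping right is what makes both the well-definedness of the $T$-action and the equivariance of $\Phi$ work out.
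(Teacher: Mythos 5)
Your proposal is correct and follows essentially the same route as the paper: the same homomorphism computations establish robustness, $A_\Sigma$-invariance, the well-definedness of the induced $T$-action, and the identification with $(V^m,A_T)$ (your map $\Phi$ is just the paper's coordinate identification $X_{\pi^{-1}(\tau)}\leftrightarrow X_\tau$ written as an explicit equivariant isomorphism). The only difference is that for robustness you verify flow-invariance under all fundamental network vector fields $\Gamma_f$ directly rather than citing the balanced-partition criterion of \cite{norm}, which is a perfectly valid, slightly more self-contained version of the same calculation (note only that the arguments of $\Gamma_f$ range over all of $\Sigma$, not just a generating set, though your computation applies verbatim).
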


\begin{proof}
We will start by showing that $\syn_{\pi}$ is a robust synchrony space. In \cite{norm} it is shown that this is the case when the partition dictating which nodes are identified in the synchrony space is respected by the elements of $\Sigma$. See also \cite{balan1}. In our case the partition corresponding to the synchrony space is $\{\pi^{-1}(\tau): \tau \in T \}$. Hence, for every $\sigma \in \Sigma$ we need to show that $\sigma \cdot \sigma'$ and $\sigma \cdot \sigma''$ are in the same set $\pi^{-1}(\tau)$ if $\sigma'$ and $\sigma''$ are in the same set $\pi^{-1}(\tau')$. In other words, we need to show that $\pi(\sigma \cdot \sigma') = \pi(\sigma \cdot \sigma'')$ if $\pi(\sigma') = \pi(\sigma'')$. However, this is immediate as $\pi(\sigma \cdot \sigma')  = \pi(\sigma)\cdot \pi(\sigma')= \pi(\sigma)\cdot \pi(\sigma'') =  \pi(\sigma \cdot \sigma'')$. \\
Next, we show that $\syn_{\pi}$ is an invariant space. Recall that $A_{\sigma}$ is given by $(A_{\sigma}X)_{\sigma'} = X_{\sigma' \cdot \sigma}$ for $\sigma, \sigma' \in \Sigma$. Hence, we see that\\ $\syn_{\pi} = \{X_{\sigma} = X_{\sigma'} \ifo \pi(\sigma) = \pi(\sigma') \} $ is invariant if for every $\sigma \in \Sigma$ we have that $\pi(\sigma' \cdot \sigma) = \pi(\sigma'' \cdot \sigma) $  whenever $\pi(\sigma') = \pi(\sigma'')$. This condition is indeed satisfied because $\pi(\sigma' \cdot \sigma)= \pi(\sigma')\cdot \pi( \sigma) = \pi(\sigma'')\cdot \pi(  \sigma)  = \pi(\sigma'' \cdot \sigma) $. \\
Finally, a similar argument shows that $A_{\sigma}$ and $A_{\sigma'}$ act the same on $\syn_{\pi}$ if $\pi(\sigma) = \pi(\sigma')$. Therefore, we may write $A_{\pi^{-1}(\tau)} := A_{\sigma}|_{\syn_{\pi}}$ for any $\sigma \in \pi^{-1}(\tau)$. Let us furthermore set  $\{X_{\pi^{-1}(\tau)} \}_{\tau \in T}$ as  coordinates for $\syn_{\pi}$,  where we have that $X_{\sigma} = X_{\pi^{-1}(\tau)}$ whenever $\pi(\sigma) = \tau$. Note that we use here that none of the sets $\pi^{-1}(\tau)$ for $\tau \in T$ is empty, as $\pi$ is assumed to be surjective. We then see that the action of $\Sigma$ on $\syn_{\pi}$ can be written as $(A_{\pi^{-1}(\tau)}X)_{\pi^{-1}(\tau')}   = X_{\pi^{-1}(\tau'\cdot \tau)}$. Hence,  identifying $X_{\pi^{-1}(\tau)}$ with $X_{\tau}$ and $A_{\pi^{-1}(\tau)}$ with $A_{\tau}$ for every $\tau \in T$, we see that $\syn_{\pi} \subset (V^n, A_{\Sigma})$ can be identified with $(V^m, A_{T})$ as representations of $T$.
\end{proof}
\noindent In light of the previous theorem, we may identify $(V^m, A_{T})$ with $\syn_{\pi} \subset (V^n, A_{\Sigma})$. Using this identification, we can associate to any linear subspace $W$ of $(V^n, A_{\Sigma})$ a linear subspace $W \cap \syn_{\pi}$ of $(V^m, A_{T})$.  The following theorem tells us that the function $W \mapsto W \cap \syn_{\pi}$ respects the structure of a decomposition into indecomposable representations.

\begin{thr}\label{submon}
Let $V^n = \bigoplus_{i = 1}^k W_i$ be a decomposition of $(V^n, A_{\Sigma})$ into indecomposable representations. Then 

\begin{equation}
V^m = \bigoplus_{W_i} W_i \cap \syn_{\pi}
\end{equation}
is a decomposition of $(V^m, A_{T})$ into indecomposable representations. Conversely, if  $V^m = \bigoplus_{i = 1}^l U_i$ is a decomposition of  $(V^m, A_{T})$ into indecomposable representations, then there exists a decomposition $V^n = \bigoplus_{i = 1}^k W_i$ of $(V^n, A_{\Sigma})$ into indecomposable representations (with $k \geq l$) such that  $U_i = W_i \cap \syn_{\pi}$ for $1 \leq i \leq l$.  Furthermore, we have the following relations between $W_i$ and $W_i \cap \syn_{\pi}$

\begin{itemize}
\item If  $W_i \cap \syn_{\pi} \not= \{0\}$ then $W_i$ and $W_i \cap \syn_{\pi}$ are of the same type (real, complex or quaternionic).
\item if  $W_i \cap \syn_{\pi} \not= \{0\}$ and $W_j \cap \syn_{\pi} \not= \{0\}$ then $W_i$ is isomorphic to $W_j$ if and only if $W_i \cap \syn_{\pi}$ is isomorphic to  $W_j \cap \syn_{\pi}$.
\item if   $W_i \cap \syn_{\pi} \not= \{0\}$ but  $W_j \cap \syn_{\pi} = \{0\}$ then $W_i$ and $W_j$ are not isomorphic.
\end{itemize}
\end{thr}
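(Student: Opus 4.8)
The proof will rest on two facts about $\syn_{\pi}$. First, $\syn_{\pi}$ is invariant under \emph{every} equivariant linear endomorphism of $(V^n,A_{\Sigma})$, not merely under the linearisations of admissible vector fields: by Theorem~\ref{inclu} it is a robust synchrony space, hence invariant under every network vector field $\Gamma_f$, and since the equivariant vector fields on $(V^n,A_{\Sigma})$ are exactly the $\Gamma_f$, each equivariant linear map arises as such a $\Gamma_f$ (with $f$ linear) and thus maps $\syn_{\pi}$ into itself. Restriction to $\syn_{\pi}$ therefore defines a unital algebra homomorphism $\rho\colon\homc_{\Sigma}(V^n)\to\homc_{T}(\syn_{\pi})$, where we identify $\syn_{\pi}\cong(V^m,A_T)$ via Theorem~\ref{inclu}. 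Second, $\rho$ is \emph{surjective}: given an equivariant $\Gamma_g$ on $(V^m,A_T)$, pick for each $\tau\in T$ an index $k(\tau)$ with $\pi(\sigma_{k(\tau)})=\tau$ and set $f(x_1,\dots,x_n):=g(x_{k(\tau_1)},\dots,x_{k(\tau_m)})$; then $\Gamma_f|_{\syn_{\pi}}=\Gamma_g$. More generally, if $W=e(V^n)$ for an equivariant idempotent $e$, the same two properties give a surjective algebra homomorphism $\homc_{\Sigma}(W)=e\homc_{\Sigma}(V^n)e\to\homc_{T}(W\cap\syn_{\pi})$ (extend a map on $W\cap\syn_{\pi}$ to $\syn_{\pi}$ using the idempotent $e|_{\syn_{\pi}}$, lift it, and corner by $e$).

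For the first assertion, write $V^n=\bigoplus_{i=1}^k W_i$ with associated primitive orthogonal equivariant idempotents $e_i$. Each $e_i$ restricts to $\syn_{\pi}$, the $e_i|_{\syn_{\pi}}$ are orthogonal idempotents summing to the identity of $\syn_{\pi}$, and since $e_i$ acts as the identity on $W_i$ one gets $e_i(\syn_{\pi})=W_i\cap\syn_{\pi}$; hence $\syn_{\pi}=\bigoplus_i(W_i\cap\syn_{\pi})$. A nonzero summand $W_i\cap\syn_{\pi}$ is indecomposable because $\homc_{\Sigma}(W_i)$ is a local ring (this is precisely Schur's lemma together with the fact that $\homc_{\Sigma}(W_i)/\nil_{\Sigma}(W_i)$ is a division algebra), and the surjection $\homc_{\Sigma}(W_i)\to\homc_{T}(W_i\cap\syn_{\pi})$ presents the target as a nonzero homomorphic image of a local ring, hence local.

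For the converse, start from $V^m=\bigoplus_{i=1}^l U_i$ with primitive orthogonal idempotents $g_i\in\homc_{T}(\syn_{\pi})$. Since idempotents lift along any surjection of finite-dimensional real algebras, I would lift $g_1$ to an idempotent $f_1\in\homc_{\Sigma}(V^n)$ with $\rho(f_1)=g_1$, lift $g_2$ to an idempotent of the corner $(\id-f_1)\homc_{\Sigma}(V^n)(\id-f_1)$, and so on, obtaining orthogonal idempotents $f_1,\dots,f_l$ with $\rho(f_i)=g_i$. With $W_i':=\im f_i$, the fact that $f_i$ fixes $W_i'$ gives $W_i'\cap\syn_{\pi}=g_i(\syn_{\pi})=U_i$, while $\ker(\sum_i f_i)\cap\syn_{\pi}=0$ because $\sum_i g_i=\id_{\syn_{\pi}}$. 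Decompose $V^n=\ker(\sum_i f_i)\oplus\bigoplus_i W_i'$ further into indecomposables; by the first part, $U_i=W_i'\cap\syn_{\pi}$ is the direct sum of the intersections with the indecomposable summands of $W_i'$, and as $U_i$ is indecomposable exactly one of these equals $U_i$ and the others vanish, while every indecomposable summand of $\ker(\sum_i f_i)$ meets $\syn_{\pi}$ trivially. Relabelling the distinguished summands as $W_1,\dots,W_l$ and collecting the rest as $W_{l+1},\dots,W_k$ yields the desired decomposition with $k\ge l$ and $W_i\cap\syn_{\pi}=U_i$.

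Finally, the three relations follow from the same machinery. If $W_i\cap\syn_{\pi}\neq\{0\}$, the surjection $\homc_{\Sigma}(W_i)\to\homc_{T}(W_i\cap\syn_{\pi})$ induces a nonzero homomorphism of the residue division algebras $\homc_{\Sigma}(W_i)/\nil_{\Sigma}(W_i)\to\homc_{T}(W_i\cap\syn_{\pi})/\nil_{T}(W_i\cap\syn_{\pi})$, which is then an isomorphism, so the two are of the same type (real, complex or quaternionic). An equivariant isomorphism $W_i\to W_j$, extended to $(V^n,A_{\Sigma})$ via $e_i$ and restricted to $\syn_{\pi}$, carries $W_i\cap\syn_{\pi}$ isomorphically onto $W_j\cap\syn_{\pi}$, which proves the third relation and one implication of the second; for the reverse implication an equivariant isomorphism $\chi\colon W_i\cap\syn_{\pi}\to W_j\cap\syn_{\pi}$ is extended to $\syn_{\pi}$, lifted through $\rho$, and cornered by $e_i,e_j$ to give equivariant maps $B\colon W_i\to W_j$ and $B'\colon W_j\to W_i$ restricting to $\chi,\chi^{-1}$, so that $B\circ B'$ restricts to the identity on the nonzero space $W_j\cap\syn_{\pi}$, is therefore not nilpotent, hence invertible by Schur's lemma, whence $B,B'$ are isomorphisms by the quoted theorem on compositions of equivariant maps between indecomposables. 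The crux of the argument, and the point I expect to require the most care, is the pair of structural facts in the first paragraph — that $\syn_{\pi}$ is stable under all equivariant endomorphisms and that restriction is onto $\homc_{T}(\syn_{\pi})$ — everything else being formal bookkeeping with idempotents, the only other delicate ingredient being the idempotent lifting used in the converse.
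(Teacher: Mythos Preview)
Your argument is correct and rests on the same two structural facts as the paper's proof: that $\syn_{\pi}$ is invariant under every equivariant linear endomorphism of $(V^n,A_\Sigma)$ (because these are exactly the linear admissible vector fields, and $\syn_\pi$ is robust), and that restriction $\homc_\Sigma(V^n)\to\homc_T(\syn_\pi)$ is surjective. The paper isolates the second fact as its Lemma~\ref{ext} and proves it exactly as you do, by picking representatives of the fibres of $\pi$.

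Where you differ is packaging and, for the converse, technique. You phrase indecomposability as ``quotient of a local ring is local''; the paper instead argues by contradiction, producing an endomorphism of $W_i$ that is neither nilpotent nor invertible. For the converse the paper avoids idempotent lifting altogether: it picks distinct real scalars $\lambda_1,\dots,\lambda_l$, defines $P:=\sum_i \lambda_i\,\mathrm{id}_{U_i}$ on $\syn_\pi$, extends $P$ equivariantly to $V^n$ via Lemma~\ref{ext}, and takes the generalised eigenspaces of the extension; the $\lambda_i$-eigenspace then meets $\syn_\pi$ exactly in $U_i$. This is more elementary and entirely self-contained, whereas your idempotent-lifting step, while valid for finite-dimensional $\R$-algebras, is a fact that itself needs justification (the kernel of $\rho$ is not nil in general, so one cannot simply appeal to the standard nil-kernel lifting lemma). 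For the ``same type'' assertion, be careful: a \emph{nonzero} homomorphism of real division algebras need not be an isomorphism (think $\R\hookrightarrow\C$); what you actually have, and should say, is that the induced map on residue division algebras is \emph{surjective} (because the original restriction map is) and hence, having trivial kernel, an isomorphism. The paper makes both injectivity and surjectivity of $[R]$ explicit.
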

\noindent The main ingredient of the proof will be the following lemma.

\begin{lem}\label{ext}
For any equivariant vector field $\Gamma_f^{T}$ on $(V^m, A_T) \simeq \syn_{\pi}$ we can find an equivariant vector field  $\Gamma_{\tilde{f}}^{\Sigma}$ on $(V^n, A_{\Sigma})$ such that $\Gamma_{\tilde{f}}^{\Sigma}|_{\syn_{\pi}} = \Gamma_f^{T}$.
\end{lem}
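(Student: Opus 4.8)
The plan is to write down an explicit extension, using the fact that equivariant vector fields on a regular representation are in bijection with \emph{response functions}. Recall (as in the description of the fundamental network around \eqref{funref}) that if $T = \{\tau_1,\dots,\tau_m\}$, then every equivariant vector field on $(V^m,A_T)$ has the form $\Gamma_f^T(X)_\tau = f\big((A_\tau X)_{\tau_1},\dots,(A_\tau X)_{\tau_m}\big)$ for a smooth $f\colon V^m\to V$, and conversely every smooth such $f$ yields an equivariant vector field; the same holds for $(V^n,A_\Sigma)$ with response functions on $V^n=\bigoplus_{\sigma\in\Sigma}V$. So it suffices to produce, from the given $f\colon V^m\to V$, a smooth $\tilde f\colon V^n\to V$ whose associated field $\Gamma_{\tilde f}^\Sigma$ restricts on $\syn_\pi$ to $\Gamma_f^T$ under the identification of Theorem \ref{inclu}.

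First I would fix a set-theoretic section $s\colon T\to\Sigma$ of $\pi$, i.e.\ a map with $\pi\circ s=\id_T$; no compatibility with multiplication is needed, and such a section exists because $\pi$ is surjective. I then set
\[
\tilde f\big((Y_\sigma)_{\sigma\in\Sigma}\big):=f\big(Y_{s(\tau_1)},\dots,Y_{s(\tau_m)}\big),
\]
which is smooth, being $f$ precomposed with a coordinate projection $V^n\to V^m$. Letting $\Gamma_{\tilde f}^\Sigma$ be the corresponding equivariant vector field on $(V^n,A_\Sigma)$, we have $\Gamma_{\tilde f}^\Sigma(Y)_\rho=f\big(Y_{s(\tau_1)\circ\rho},\dots,Y_{s(\tau_m)\circ\rho}\big)$ for $\rho\in\Sigma$.

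Next I would verify the restriction. By Theorem \ref{inclu}, a point $Y\in\syn_\pi$ corresponds to the point $X\in V^m$ with $X_{\pi(\sigma)}=Y_\sigma$ for all $\sigma\in\Sigma$. Using that $\pi$ is a homomorphism and $\pi(s(\tau_j))=\tau_j$, for such $Y$ and any $\rho\in\Sigma$ we get
\[
Y_{s(\tau_j)\circ\rho}=X_{\pi(s(\tau_j)\circ\rho)}=X_{\pi(s(\tau_j))\circ\pi(\rho)}=X_{\tau_j\circ\pi(\rho)},
\]
hence $\Gamma_{\tilde f}^\Sigma(Y)_\rho=f\big((X_{\tau_j\circ\pi(\rho)})_{j}\big)=\Gamma_f^T(X)_{\pi(\rho)}$. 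In particular $\Gamma_{\tilde f}^\Sigma(Y)_\rho$ depends only on $\pi(\rho)$, so $\Gamma_{\tilde f}^\Sigma(Y)\in\syn_\pi$ (this also follows from robustness of $\syn_\pi$, Theorem \ref{inclu}), and under the identification of $\syn_\pi$ with $(V^m,A_T)$ the vector $\Gamma_{\tilde f}^\Sigma(Y)$ is precisely $\Gamma_f^T(X)$. This is the asserted identity $\Gamma_{\tilde f}^\Sigma|_{\syn_\pi}=\Gamma_f^T$.

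There is no serious obstacle here; the only point requiring care is keeping the coordinates of the two regular representations apart and invoking the homomorphism property of $\pi$ at the right place. (One could equally let $\tilde f$ depend on the averages $\tfrac{1}{\#\pi^{-1}(\tau)}\sum_{\sigma\in\pi^{-1}(\tau)}Y_\sigma$ rather than on a chosen representative $Y_{s(\tau)}$; the computation is identical, since on $\syn_\pi$ such an average equals the common value $X_\tau$.)
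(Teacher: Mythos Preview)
Your proof is correct and follows essentially the same construction as the paper: both choose a section $s\colon T\to\Sigma$ of $\pi$ (the paper calls these ``representatives $\sigma_\tau$'') and define $\tilde f$ by precomposing $f$ with the corresponding coordinate projection $V^n\to V^m$. The only difference is in the verification: the paper argues indirectly that $\Gamma_{\tilde f}^\Sigma|_{\syn_\pi}$ is $T$-equivariant and hence of the form $\Gamma_g^T$, then checks $g=f$ at the unit coordinate, whereas you compute $\Gamma_{\tilde f}^\Sigma(Y)_\rho$ directly and match it to $\Gamma_f^T(X)_{\pi(\rho)}$ using the homomorphism property of $\pi$ --- a slightly more explicit route to the same conclusion.
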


\begin{proof}
Let us assume we are given the equivariant vector field $\Gamma_f^T$ on $(V^m, A_T)$ corresponding to the function $f = (\Gamma_f^T)_{e_T}: V^m \rightarrow V$, where $e_T$ denotes the unit in $T$. For every $\tau \in T$ we may pick one element $\sigma_{\tau} \in \Sigma$ such that $\pi(\sigma_{\tau}) = \tau$. In other words, we pick one representative out of every class  $\pi^{-1}(\tau)$. Note that none of the sets $\pi^{-1}(\tau)$ is empty, as $\pi$ is surjective. Writing $\Sigma = \{ \sigma_1 \dots \sigma_n\}$ and $T = \{\tau_1, \dots \tau_m\}$, we then define the function $\tilde{f}: V^n \rightarrow V$ given by \\ $\tilde{f}(X_{\sigma_1} \dots X_{\sigma_n}) := f(X_{\sigma_{\tau_1}} \dots  X_{\sigma_{\tau_m}})$. In particular, it follows that $\tilde{f}|_{\syn_{\pi}} = f$. We claim that $\Gamma_{\tilde{f}}^{\Sigma}$ is an equivariant vector field satisfying $\Gamma_{\tilde{f}}^{\Sigma}|_{\syn_{\pi}} = \Gamma_f^{T}$. First of all, because $\syn_{\pi}$ is a robust synchrony space of $(V^m, A_T)$ we see that $\Gamma_{\tilde{f}}^{\Sigma}$ indeed sends elements of $\syn_{\pi}$ to itself. Next, because $\syn_{\pi}$ is an invariant space on which the action of $\Sigma$ coincides with that of $T$, we may conclude that $\Gamma_{\tilde{f}}^{\Sigma}|_{\syn_{\pi}} $ is a $T$-equivariant vector field on $(V^m, A_T)$. In particular, we have that $\Gamma_{\tilde{f}}^{\Sigma}|_{\syn_{\pi}} = \Gamma_g^T$ for some function $g:V^m \rightarrow V$. Finally, it remains to show that $g = f$. Per definition, we have that $g = (\Gamma_{g}^T)_{e_T}$. Using the identification between $\syn_{\pi}$ and $(V^m, A_T)$, we see that $g = (\Gamma_{\tilde{f}}^{\Sigma}|_{\syn_{\pi}})_{\sigma}$ for any element $\sigma \in \pi^{-1}(e_T)$. In particular, we know that the unit in $\Sigma$ is contained in $\pi^{-1}(e_T)$. We therefore see that $g = (\Gamma_{\tilde{f}}^{\Sigma}|_{\syn_{\pi}})_{e_\Sigma} = \tilde{f}|_{\syn_{\pi}} = f$. This proves the lemma.
\end{proof}
\noindent Given an equivariant vector field on $(V^m, A_T)$, we will generally use a tilde to denote an equivariant extension on $(V^n, A_{\Sigma})$ in the spirit of lemma \ref{ext}. Note that it follows from the proof of lemma \ref{ext} that the extension of a linear vector field can be taken to be linear as well. Furthermore, by fixing the choice of representatives in the proof of lemma \ref{ext}, we see that any smooth family of vector fields on $(V^m, A_T)$ can be extended to a smooth family of vector fields on $(V^n, A_{\Sigma})$.

\begin{proof}[Proof of theorem \ref{submon}]
Let $V^n = \bigoplus_{i = 1}^k W_i$ be a decomposition of $(V^n, A_{\Sigma})$ into invariant spaces. The projections $P_i :V^n \rightarrow W_i \subset V^n$ on the different components are equivariant maps and therefore leave the space $\syn_{\pi}$ invariant. In particular, for any element $v \in \syn_{\pi}$ we see that $v = \sum_{i=1}^k P_i(v)$ is a decomposition into elements of $W_i \cap \syn_{\pi}$. Since any decomposition into elements of $W_i$, hence into elements of $W_i \cap \syn_{\pi}$ is unique, it follows that

\begin{equation}\label{decom}
\syn_{\pi} = \bigoplus_{W_i \cap \syn_{\pi}\not= \{0\}} W_i \cap \syn_{\pi}
\end{equation}
is a decomposition of $\syn_{\pi} \simeq (V^m, A_T)$ into invariant spaces. \\
Let us now assume one of the components $W_i$ is indecomposable. We will show that this implies that $W_i \cap \syn_{\pi}$ is indecomposable as well, by assuming the converse and arriving at a contradiction. Suppose we can write $W_i  \cap \syn_{\pi} = U_0 \oplus U_1$, where $U_0$ and $U_1$ are invariant spaces both unequal to $\{0\}$. Denote by $P^T_1: \syn_{\pi} \rightarrow U_1 \subset \syn_{\pi}$ the projection onto $U_1$ corresponding to the decomposition $W_i  \cap \syn_{\pi} = U_0 \oplus U_1$ and to decomposition $\eqref{decom}$. In particular, we see that $P^T_1|_{U_0} = 0$ and that $P^T_1|_{U_1} = \id|_{U_1}$. Because $P^T_1$ is an equivariant map, it follows from lemma \ref{ext} that there exists an equivariant map $\widetilde{P^T_1}$  on $(V^n, A_{\Sigma})$ such that $\widetilde{P^T_1}|_{\syn_{\pi}} = P^T_1$. Let us continue to denote by  $P_i :V^n \rightarrow W_i \subset V^n$ the projection onto $W_i$ corresponding to the decomposition $V^n = \bigoplus_{i = 1}^k W_i$. The map $H:= P_i \circ \widetilde{P^T_1}|_{W_i}:W_i \rightarrow W_i$ is an equivariant map from the indecomposable representation space $W_i$ to itself.  Therefore, it is either invertible or nilpotent. This is a contradiction though, as $P_i\circ \widetilde{P^T_1}|_{U_1} = P_i \circ \id|_{U_1} = \id_{U_1}$, so $H$ is not nilpotent, and $P_i \circ \widetilde{P^T_1}|_{U_0} = 0$, so $H$ is not invertible. We conclude that indeed $W_i \cap \syn_{\pi}$ has to be indecomposable if the component $W_i$ is. In particular, if $V^n = \bigoplus_{i = 1}^k W_i$ is a decomposition into indecomposable sub-representations then so is expression \eqref{decom}. \\
Now suppose that conversely we are given a decomposition $\syn_{\pi} = \bigoplus_{i = 1}^l U_i$ into (positive dimensional) sub-representations. Let us choose a set $\{\la_i\}$ of $l$ distinct values in $\R$  and define the linear map $P_{\{\la_i\}}: \syn_{\pi} \rightarrow \syn_{\pi}$ given by $P_{\{\la_i\}}|_{U_i} = \la_i \cdot \id|_{U_i}$ for all $i \in \{1, \dots l\}$. Because the spaces $U_i$ are invariant it follows that the map $P_{\{\la_i\}}$ is equivariant. In particular, we may conclude from lemma \ref{ext} that there exists an equivariant map $\widetilde{P}_{\{\la_i\}}$  from $(V^n, A_{\Sigma})$ to itself that restricts to $P_{\{\la_i\}}$. Let us denote by  $EV(\widetilde{P}_{\{\la_i\}})$ the set of eigenvalues of the map $\widetilde{P}_{\{\la_i\}}$ (containing only one of each complex pair). Note that $\{ \la_i\}$ is included in $EV(\widetilde{P}_{\{\la_i\}})$, as $\{ \la_i\}$ is the set of eigenvalues of $P_{\{\la_i\}} = \widetilde{P}_{\{\la_i\}}|_{\syn_{\pi}}$. Denoting by $W_{\mu}$ the generalized eigenspace of $\widetilde{P}_{\{\la_i\}}$ corresponding to the eigenvalue  $\mu \in EV(\widetilde{P}_{\{\la_i\}})$, we get a decomposition of $(V^n, A_{\Sigma})$ into invariant spaces

\begin{equation}\label{decom2}
V^n= \bigoplus_{\mu \in EV(\widetilde{P}_{\{\la_i\}}) } W_{\mu} \quad.
\end{equation}
Note that we have $W_{\la_i} \cap \syn_{\pi} = U_i$ for all $i \in \{1, \dots l\}$ and $W_{\mu} \cap \syn_{\pi}= \{0\}$ if $\mu \notin \{\la_i\}$. Hence, the decomposition \eqref{decom2} gives rise to the decomposition $\syn_{\pi} = \bigoplus_{i = 1}^l U_i$ in the sense of the first part of the theorem. Moreover, we may further decompose $W_{\la_i} = \oplus_{j=1}^p W_{\la_i}^j$ into indecomposable representations, which gives rise to a decomposition of  $U_i$.  Assuming $U_i$ is indecomposable, we conclude that there is a $j \in \{1, \dots p \}$ such that $U_i = W_{\la_i}^j \cap \syn_{\pi}$ and such that $U_i \cap (W_{\la_i}^q \cap \syn_{\pi}) = W_{\la_i}^q \cap \syn_{\pi} = \{0\}$ for all $q \not= j$. Hence, the components $W_i$ may be chosen to be indecomposable themselves.\\
Next, we show that $W_i \cap \syn_{\pi}$ and $W_i$ are of the same type if both are indecomposable and if $W_i \cap \syn_{\pi} \not= \{0\}$. Given any equivariant map $\phi: W_i \rightarrow W_i$, we may extend this map to an equivariant map $\phi': (V^n, A_{\Sigma}) \rightarrow (V^n, A_{\Sigma})$ by setting $\phi'|_{W_i} = \phi$ and $\phi'|_{W_j} = 0$ for $j\not=i$. Because the map $\phi'$ is an equivariant vector field on $(V^n, A_{\Sigma})$, it sends the robust synchrony space $\syn_{\pi}$ to itself. From this we conclude that $\phi$ restricts to an equivariant map from $W_i \cap \syn_{\pi}$ to itself. In other words, if we denote by $\homc_{\Sigma}(W_i)$ and $\homc_{\Sigma}(W_i \cap \syn_{\pi})$ the space of $\Sigma$-endomorphisms on $W_i$ respectively $W_i \cap \syn_{\pi}$, then restriction defines a linear map

\begin{equation} \label{mapR}
\begin{split}
R: \homc_{\Sigma}(W_i) &\rightarrow \homc_{\Sigma}(W_i \cap \syn_{\pi})\\
\phi &\mapsto \phi|_{W_i \cap \syn_{\pi}} \quad.
\end{split}
\end{equation}
Moreover, if $\phi \in \homc_{\Sigma}(W_i)$ is nilpotent then so is $\phi|_{W_i \cap \syn_{\pi}}$. Hence, $R$ factors through to a map

\begin{equation} \label{map[R]}
\begin{split}
[R]: \homc_{\Sigma}(W_i)/ \nil_{\Sigma}(W_i)&\rightarrow \homc_{\Sigma}(W_i \cap \syn_{\pi})/ \nil_{\Sigma}(W_i \cap \syn_{\pi})\\
[\phi] &\mapsto [R(\phi)] \quad,
\end{split}
\end{equation}
where $\nil_{\Sigma}(W_i) \subset \homc_{\Sigma}(W_i)$ and $\nil_{\Sigma}(W_i \cap \syn_{\pi}) \subset \homc_{\Sigma}(W_i \cap  \syn_{\pi})$ denote the nilpotent elements. We will now show that $[R]$ is a bijection, thereby proving that $W_i \cap \syn_{\pi}$ and $W_i$ are of the same type. Injectivity of $[R]$ follows from the fact that $R(\phi) := \phi|_{W_i \cap  \syn_{\pi} }$ is invertible whenever $\phi \in \homc_{\Sigma}(W_i)$ is. As for surjectivity, this is true for $[R]$ if it is true for $R$. Therefore, let\\ $\psi \in \homc_{\Sigma}(W_i \cap \syn_{\pi})$ be given, we will construct an element $\phi \in \homc_{\Sigma}(W_i)$ such that $R(\phi) = \psi$. For this purpose, we first construct an equivariant map $\psi' :\syn_{\pi} \rightarrow \syn_{\pi}$ such that $\psi'|_{W_i \cap  \syn_{\pi} } = \psi$, for example by letting $\psi'$ vanish on an invariant complement of $W_i \cap  \syn_{\pi}$ in $ \syn_{\pi}$. By lemma \ref{ext} there exist an equivariant extension $\widetilde{\psi'}$ of $\psi'$ to $(V^n, A_{\Sigma})$. The map $\phi := P_i \circ \widetilde{\psi'}|_{W_i}$ is then an element of $\homc_{\Sigma}(W_i)$. Furthermore, we have $\phi|_{W_i \cap  \syn_{\pi}} = P_i \circ \widetilde{\psi'}|_{W_i \cap  \syn_{\pi}} = P_i \circ \psi'|_{W_i \cap  \syn_{\pi}} = P_i \circ \psi = \psi$.  This proves that $R$ and therefore $[R]$ is surjective and hence that $W_i$ and $W_i \cap \syn_{\pi}$ are of the same type. \\
Next, suppose $W_i$ and $W_j$ are isomorphic indecomposable representations. We will show that there exists an invertible equivariant map from $W_i \cap \syn_{\pi}$ to $W_j \cap \syn_{\pi}$. To this end, let $\alpha$ be an isomorphism from $W_i$ to $W_j$. As before, we can expand $\alpha$ to an equivariant map $\alpha'$ on $(V^n, A_{\Sigma})$ by letting $\alpha'$ vanish on some complement of $W_i$. The map $\alpha'$ then sends the space $\syn_{\pi}$ to itself. In particular, we see that $\alpha'$ sends $W_i \cap \syn_{\pi}$ to $W_j \cap \syn_{\pi}$. Moreover, since $\alpha'|_{W_i} = \alpha$, we see that $\alpha'|_{W_i \cap \syn_{\pi}}$ is injective. Repeating this procedure with $\alpha$ replaced by $\alpha^{-1}$ and with the roles of $W_i$ and $W_j$ reversed, we see that there exist injective equivariant maps from $W_i \cap \syn_{\pi}$ to $W_j \cap \syn_{\pi}$ and vice versa. Hence, both are bijections.  This shows that if $W_i$ and $W_j$ are isomorphic and if $W_i \cap \syn_{\pi} \not= \{0\}$, then $W_j\cap \syn_{\pi} \not= \{0\}$ and $W_i \cap \syn_{\pi}$ and $W_j \cap \syn_{\pi}$ are isomorphic.  \\
Finally, we will show that if $W_i$ and $W_j$ are indecomposable, and if $W_i \cap \syn_{\pi} \not= \{0\}$ and $W_i \cap \syn_{\pi} \not= \{0\}$ are isomorphic, then $W_i$ and $W_j$ are isomorphic as well. For this purpose, let $\beta_{ij}: W_{i} \cap \syn_{\pi} \rightarrow W_j \cap \syn_{\pi} $ and\\ $\beta_{ji}: W_j \cap \syn_{\pi} \rightarrow W_i \cap \syn_{\pi}$ be isomorphisms. As before, we can extend $\beta_{ij}$ and $\beta_{ji}$ to equivariant maps $\beta'_{ij}, \beta'_{ji}: \syn_{\pi} \rightarrow \syn_{\pi}$ by letting them vanish on some compliment of $W_i \cap \syn_{\pi}$ respectively $W_j \cap \syn_{\pi}$. Next, by lemma \ref{ext} there exist maps $\widetilde{\beta'_{ij}}$ and $\widetilde{\beta'_{ji}}$ on $(V^n, A_{\Sigma})$ that restrict to $\beta'_{ij}$ and $\beta'_{ji}$ on $\syn_{\pi}$ and hence to  $\beta_{ij}$ and $\beta_{ji}$ on $W_i \cap \syn_{\pi}$ and $W_j \cap \syn_{\pi}$, respectively. Therefore, the maps $B_{ij}: = P_j \circ \widetilde{\beta'_{ij}}|_{W_i}: W_i \rightarrow W_j$ and $B_{ji}: = P_i \circ \widetilde{\beta'_{ji}}|_{W_j}: W_j \rightarrow W_i$ are equivariant and likewise restrict to  $\beta_{ij}$ and $\beta_{ji}$ on $W_i \cap \syn_{\pi}$ and $W_j \cap \syn_{\pi}$. We finish the proof by noting that the map $B_{ji} \circ B_{ij}: W_i \rightarrow W_j \rightarrow  W_i$ is an equivariant map that is the composition of two equivariant maps between indecomposable representations. Hence, it is either nilpotent or we have that $W_i$ and $W_j$ are isomorphic. The former can however not be, as $B_{ji} \circ B_{ij}$ restricts to the invertible function $\beta_{ji}\circ \beta_{ij}: W_i \cap \syn_{\pi} \rightarrow W_i \cap \syn_{\pi}$. We conclude that indeed $W_i$ and $W_j$ are isomorphic. This proves the theorem. 

\end{proof}
\noindent By the strong correlation between indecomposable representations and generic bifurcations as laid out in the previous sections, theorem \ref{submon} can be read as a result relating the generic bifurcations of two homogeneous networks. More specifically, such a result holds if the monoid of the one network is a quotient monoid of the other network. In the next section we will further explore this relation.

\section{Quotient networks and quotient monoids}
Let $N$ be a homogeneous coupled cell network with nodes $C$ and monoid $\Sigma$. Recall that a balanced partition of $N$ is a partition $P=\{P_1 \dots P_s\}$ of the set $C$ such that the elements of $\Sigma$ respect $P$. In other words, for every $\sigma_i \in \Sigma$ it holds that if $q,r \in C$ are two nodes from the same partition class $P_j$, then $\sigma_i(q)$ and $\sigma_i(r)$ are two elements from some same partition class $P_k$. We will often use $[q]$ to denote the partition class containing a node $q$, i.e. we have $[q] = P_j$ if and only if $q \in P_j$. In this notation the partition being balanced means that $[q] = [r]$ implies $[\sigma_i(q)] = [\sigma_i(r)]$ for every $\sigma_i \in \Sigma$. Hence, it follows that $\Sigma$ naturally factors through to a set of functions from the set of partition classes to itself, by setting $\sigma_i([q]) := [\sigma_i(q)]$. We will furthermore identify two functions $\sigma_i$ and $\sigma_j$ if they act the same on the set of partition classes. I.e. we write $\sigma_i \sim \sigma_j$ if and only if $[\sigma_i(q)] = [\sigma_j(q)]$ for all $q \in C$. The corresponding equivalence class of functions $[\sigma_i] = [\sigma_j]$ can then be seen as one well-defined function from the set of partition classes to itself by writing $[\sigma_i]([q]) := [\sigma_i(q)]$ for every $q \in C$. To summarize, we may define a new homogeneous coupled cell network $N_P$, whose set of nodes is $C_P:=\{[q]: q\in C\} = \{P_j\}_{j=1}^s$ and whose arrows are described by the functions $\Sigma_P := \{[\sigma_i]: \sigma_i \in \Sigma\}$. Note that $[\id]$ is the identity function on $C_P$, where $\id \in \Sigma$ is the identity on $C$. \\
It can be seen that $\Sigma_P$ is closed under composition and is therefore a monoid itself. Namely, we have 
\begin{equation}\label{calc}
[\sigma_i]\circ [\sigma_j]([q])= [\sigma_i]([\sigma_j(q)]) =[(\sigma_i\circ \sigma_j)(q)] = [\sigma_i\circ \sigma_j]([q]) \quad,
\end{equation}
for all $q \in C$ and $\sigma_i, \sigma_j \in \Sigma$. In fact, it follows from equation \eqref{calc} that the map 

\begin{equation}\label{pip}
\begin{split}
\pi_P: \Sigma &\rightarrow \Sigma_P\\
\sigma_i &\mapsto [\sigma_i]
\end{split}
\end{equation}
satisfies $\pi_P(\sigma_i \circ \sigma_j) = \pi_P(\sigma_i) \circ \pi_P(\sigma_j)$ for all $\sigma_i, \sigma_j \in \Sigma$. Combined with the fact that $\pi_P$ is a surjection, we get the following result.

\begin{thr}
Let $N = (C,\Sigma)$ be a homogeneous coupled cell network and let $N_P = (C_P, \Sigma_P)$ be a quotient network of $N$ corresponding to a balanced partition $P$ of $N$. Then, $\Sigma_P$ is a quotient monoid of $\Sigma$ via the surjection $\pi_P$.
\end{thr}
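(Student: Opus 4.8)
The plan is essentially to observe that almost all the work has already been done in the surrounding text, and that the theorem is a bookkeeping statement assembling those pieces. The three properties required of a homomorphism of monoids $\pi_P\colon \Sigma \to \Sigma_P$ are: it is well-defined as a function, it respects multiplication, and it sends the unit to the unit (the last of which, by Remark~\ref{unit}, is automatic once the other two hold together with surjectivity). So the proof reduces to checking well-definedness and multiplicativity, plus surjectivity.

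First I would recall that $\Sigma_P$ was \emph{constructed} as the set $\{[\sigma_i] : \sigma_i \in \Sigma\}$ of equivalence classes of functions on $C$ under the relation $\sigma_i \sim \sigma_j \iff [\sigma_i(q)] = [\sigma_j(q)]$ for all $q \in C$, and that this relation makes sense precisely because $P$ is balanced, so that each $[\sigma_i]$ genuinely descends to a function $C_P \to C_P$. Hence the assignment $\sigma_i \mapsto [\sigma_i]$ is a well-defined map $\Sigma \to \Sigma_P$, and it is surjective by the very definition of $\Sigma_P$ as the image of this assignment. Next I would point to equation~\eqref{calc}, which has already been verified in the text: for all $q \in C$,
\begin{equation*}
[\sigma_i]\circ[\sigma_j]([q]) = [(\sigma_i\circ\sigma_j)(q)] = [\sigma_i\circ\sigma_j]([q]),
\end{equation*}
so that $[\sigma_i]\circ[\sigma_j] = [\sigma_i\circ\sigma_j]$ as functions on $C_P$, which is exactly the statement $\pi_P(\sigma_i\circ\sigma_j) = \pi_P(\sigma_i)\circ\pi_P(\sigma_j)$. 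Finally, I would invoke Remark~\ref{unit}: since $\pi_P$ is a surjection satisfying $\pi_P(\sigma_i\circ\sigma_j)=\pi_P(\sigma_i)\circ\pi_P(\sigma_j)$, it automatically sends the unit $\id\in\Sigma$ to the unit of $\Sigma_P$ (one can also note directly that $[\id]$ is the identity on $C_P$). Therefore $\pi_P$ is a surjective homomorphism of monoids and $\Sigma_P$ is a quotient monoid of $\Sigma$.

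There is no real obstacle here; the only thing worth being careful about is making sure $\Sigma_P$ is actually closed under composition so that it is a monoid in its own right — but this too is already established, being an immediate consequence of \eqref{calc}, since it shows the composite of two classes is again a class of the form $[\sigma_k]$. In short, the theorem is a corollary of the preceding discussion, and the "proof" consists of naming which earlier facts combine to give it: the balancedness of $P$ (well-definedness), equation~\eqref{calc} (multiplicativity and closure), the construction of $\Sigma_P$ as an image (surjectivity), and Remark~\ref{unit} (preservation of the unit).
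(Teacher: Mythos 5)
Your proposal is correct and matches the paper exactly: the paper gives no separate proof, treating the theorem as an immediate consequence of the preceding discussion — well-definedness from balancedness, multiplicativity from equation \eqref{calc}, surjectivity from the construction of $\Sigma_P$, and the unit handled by the observation that $[\id]$ is the identity on $C_P$ (or via remark \ref{unit}). Your assembly of these facts is precisely the intended argument.
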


\begin{remk}\label{synspac}
Suppose that $p \in C$ is a fully dependent cell for the network $N$. In other words, we have that $\{\sigma_i(p): \sigma_i \in \Sigma\} = C$. It follows that $\{[\sigma_i]([p]): [\sigma_i] \in \Sigma_P\} = \{[\sigma_i(p)]: \sigma_i \in \Sigma\} = C_P$. Hence, we see that $[p]$ is a fully dependent cell for the network $N_P$. \\
For this reason, we may identify the network $N_P$ with a synchrony space $\syn_{N_P,[p]}$ of its regular representation $(V^m, A_{\Sigma_P})$, where we have set $m:=\# \Sigma_P$. Specifically, this synchrony space is given by 
\begin{equation}
\syn_{N_P,[p]} := \{X_{[\sigma_i]} = X_{[\sigma_j]} \text{ if } [\sigma_i]([p]) = [\sigma_j]([p])\} \subset (V^m, A_{\Sigma_P}) \quad.
\end{equation}
Furthermore, by the previous section we may identify $(V^m, A_{\Sigma_P})$ with a synchrony space $\syn_{\pi_P} \subset (V^n, A_{\Sigma})$, given by
\begin{equation}
\syn_{\pi_P} := \{X_{\sigma_i} = X_{\sigma_j} \text{ if } [\sigma_i] = [\sigma_j]\} \subset (V^n, A_{\Sigma}) \quad.
\end{equation}
Therefore we can realize the network $N_P$ as the synchrony space $\syn_{N_P,[p]}\cap \syn_{\pi_P}$ of $(V^n, A_{\Sigma})$. This latter synchrony space is explicitly given by 
\begin{equation} \label{NPpi}
\begin{split}
\syn_{N_P,[p]}\cap \syn_{\pi_P}\:= &\,\{X_{\sigma_i} = X_{\sigma_j} \text{ if } [\sigma_i]([p]) = [\sigma_j]([p])\} = \\
 &\,\{X_{\sigma_i} = X_{\sigma_j} \text{ if } [\sigma_i(p) ]= [\sigma_j(p)]\} \subset (V^n , A_{\Sigma}) \quad.
\end{split}
\end{equation}
There is however a second way of identifying $N_p$ as a synchrony space of $(V^n, A_{\Sigma})$. Namely by first identifying it with a synchrony space $\syn_P$ of the network $N$, and by then identifying $N$ with the synchrony space
\begin{equation}
\syn_{N,p} :=  \{X_{\sigma_i} = X_{\sigma_j} \text{ if } \sigma_i(p) = \sigma_j(p)\} \subset (V^n, A_{\Sigma}) \quad.
\end{equation}
By this procedure $N_p$  corresponds to the synchrony space
\begin{equation}\label{PN}
\syn_{P} \cap \syn_{N,p} :=  \{X_{\sigma_i} = X_{\sigma_j} \text{ if } [\sigma_i(p)] = [\sigma_j(p)]\} \subset (V^n, A_{\Sigma}) \quad.
\end{equation}
In particular we see that the expressions \eqref{NPpi} and \eqref{PN} agree, meaning that the two identifications of the network $N_P$ in $(V^n, A_{\Sigma})$ coincide. 
\hfill $\triangle$
\end{remk}

\begin{remk}\label{inpor}
Suppose we are given a homogeneous coupled cell network $N$ (with a fully dependent cell) and a quotient network $N_P$. To understand the bifurcations in a fully synchronous point of the network $N_P$ we may use center manifold reduction in the space $(V^m,A_{\Sigma_P})$ corresponding to the fundamental network of $N_p$. In particular, the possible reduced vector fields on $(V^m,A_{\Sigma_P})$ are exactly all equivariant vector fields on a complementable subspace $W \subset V^m$. The possible reduced vector fields for $N_p$ are then exactly these vector fields restricted to $W \cap \syn_{N_P,[p]}$. \\
By theorems \ref{inclu} and \ref{submon} we may identify the space $(V^m,A_{\Sigma_P})$ with a robust synchrony space $\syn_{\pi_P} \subset (V^n,A_{\Sigma})$ and find a complementable subspace $W' \subset V^n$ such that $W = W' \cap \syn_{\pi_P}$. Furthermore, if $W$ decomposes into indecomposable representations as 

\begin{equation}
W = \bigoplus_{i\in I} {W}_i^{n_i} \quad,
\end{equation}
for some finite counting set $I$, then $W'$ can be chosen to decompose into indecomposable representations as

\begin{equation}
W' = \bigoplus_{i\in I} {W'}_i^{n_i} \quad,
\end{equation}
where we have that $W_i$ and $W'_i$ are of the same type (i.e. real, complex or quaternionic) for all $i \in I$. It follows from lemma \ref{ext} that the reduced vector fields of $N_P$ are the equivariant vector fields on some sub-representation $W' \subset V^n$ restricted to  $W' \cap \syn_{\pi_P} \cap \syn_{N_P,[p]}$. \\
Likewise, reduced vector fields for the network $N$ are equivariant vector fields on $W'$ restricted to $W' \cap \syn_{N,p}$. Since we know from remark \ref{synspac} that the robust synchrony spaces $\syn_{\pi_P} \cap \syn_{N_P,[p]}$ and $\syn_P \cap \syn_{N,p}$ coincide, we have that $(W' \cap \syn_{N,p}) \cap \syn_{P} = W' \cap \syn_{N,p} \cap \syn_P = W' \cap \syn_{\pi_P} \cap \syn_{N_P,[p]}$. We conclude from this that the reduced vector fields of $N_P$ are exactly those of $N$ restricted to the synchrony space $\syn_P$. In particular, the possible dynamics on the center manifold of an $N_P$ system can be obtained by restricting the possible dynamics on the center manifold of an $N$ system to the synchrony space $\syn_P$.\\
If $(V^n, A_{\Sigma})$ furthermore decomposes into distinct indecomposable representations, then it is known that a one-parameter steady state bifurcation generically occurs along one indecomposable representation of real type. See \cite{repr}. Now, from theorem \ref{submon} it follows that $(V^m, A_{\Sigma_P})$ decomposes into distinct indecomposable representations whenever $(V^n, A_{\Sigma})$ does. Moreover, it follows that $W' \cap \syn_{\pi_P}$ is of real type whenever $W'$ is. From this we conclude that in the case of distinct indecomposable representations, the generic one-parameter steady state bifurcations of $N_P$ are exactly those of $N$ restricted to $\syn_P$. It is believed by the authors that the condition of distinct indecomposable representations can be dropped. Furthermore, it is believed that in the event of more bifurcation parameters, there are similar results about the generalized kernel and center subspace being generically a number of indecomposable representations of specific types. This would further translate the generic bifurcations on $N$ to the generic ones on $N_P$.
\hfill $\triangle$
\end{remk}
\section{Reduction by projection blocks}

In view of remark \ref{inpor}, it makes sense to look for complementable subspaces $W'$ of $(V^n,A_{\Sigma})$ such that $W' \cap \syn_{N,p} = W' \cap \syn_{N,p} \cap \syn_P$. In that case, the reduced vector fields corresponding to $W'$ on the network $N$ are exactly those corresponding to the sub-representation $W' \cap \syn_{\pi_P}$ on the network $N_P$. In particular, the bifurcations corresponding to $W'$ on the network $N$ are then exactly those on $N_P$ corresponding to $W' \cap \syn_{\pi_P}$. In this section we will describe a class of networks that admit a quotient network on which certain of the sub-representations indeed coincide.

\begin{defi}[Blocks and projection blocks] \label{blok}
Let $N$ be a homogeneous coupled cell network with nodes $C$ and monoid $\Sigma$. A subset of nodes $B \subset C$ is called a \textit{block} if there are no arrows in the graph of $N$ going from a source outside of $B$ to a target inside of $B$. In other words, $B$ is a block if and only if $\sigma_i(b) \in B$ for all $b \in B$ and $\sigma_i \in \Sigma$.\\
A block $B$ is called a \textit{projection block} if there furthermore exists an element $\kappa \in \Sigma$ such that $\kappa(C) = B$ and $\kappa(B) = B$.
\end{defi}
\noindent A block $B$ in a homogeneous coupled cell network $N$ naturally gives rise to a balanced partition. Namely, we say that $q \sim r$ if and only if $q,r \in B$. It is balanced because $q,r \in B$ implies $\sigma_i(q),\sigma_i(r) \in B$ for all $\sigma_i \in \Sigma$, per definition of a block. The resulting quotient network of $N$ corresponding to this balanced partition can be obtained from $N$ by identifying the points in $B$ with a single point $[B]$. Note that we then have $[\sigma_i]([B]) = [B]$ for all $\sigma_i \in \Sigma$. \\
A projection block roughly means that there are some colors of arrows that restrict to a bijection on the block, and whose concatenations connect every point in the network to this block. More precisely, every point in the network can be traced to this block by following arrows of these colors in reverse direction. The monoid element $\kappa$ from definition \ref{blok} is then found as the product of sufficiently many terms corresponding to arrows of these colors. This is the content of the following theorem.

\begin{thr}\label{pict}
Let $B$ be a block in a  homogeneous coupled cell network $N$ with nodes $C$ and monoid $\Sigma$. Suppose that $\Pi \subset \Sigma$ is a generating set for $\Sigma$. Then, $B$ is a projection block if and only if there exists a subset $\Theta = \{\theta_i\}_{i=1}^t \subset \Pi$ satisfying

\begin{itemize}
\item $\theta_i(B) = B$ for all $\theta_i \in \Theta$.
\item For every point $q \in C$ there exists a finite sequence $\{\theta_{i_1} \dots \theta_{i_s}\}$ of elements in $\Theta$ such that $(\theta_{i_1} \circ \dots \circ \theta_{i_s})(q) \in B.$
\end{itemize}
\end{thr}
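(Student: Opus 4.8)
The plan is to prove both implications directly, leaning on two elementary observations. First, since $B$ is a block, \emph{every} element of $\Sigma$ maps $B$ into $B$; combined with the finiteness of $B$ this means that whenever a composition $\sigma_{i_1}\circ\cdots\circ\sigma_{i_r}$ of elements of $\Sigma$ maps $B$ \emph{onto} $B$, each factor $\sigma_{i_j}$ must already map $B$ onto $B$. Second, the block property also says that once a node $q$ has been moved into $B$ by some word, any further application of elements of $\Sigma$ keeps it inside $B$. The theorem is essentially a dictionary between the single element $\kappa$ and a set $\Theta$ of generators, and these two observations are what make the dictionary work in both directions.

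For ``$B$ a projection block $\Rightarrow$ $\Theta$ exists'', take $\kappa\in\Sigma$ with $\kappa(C)=B$ and $\kappa(B)=B$. Since $\Pi$ generates $\Sigma$, write $\kappa=\pi_1\circ\cdots\circ\pi_r$ with $\pi_j\in\Pi$ (if $\kappa=\id$ then $C=B$ and we may take $\Theta=\emptyset$, so assume $r\geq1$). Because $B$ is a block each $\pi_j$ restricts to a self-map $\pi_j|_B\colon B\to B$, and $\kappa|_B=\pi_1|_B\circ\cdots\circ\pi_r|_B$ is onto $B$; by the elementary fact that a surjective composition of endomaps of a finite set has all factors bijective, we get $\pi_j(B)=B$ for every $j$. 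Hence $\Theta:=\{\pi_1,\dots,\pi_r\}\subseteq\Pi$ satisfies the first condition, and for the second condition the \emph{single} sequence $\pi_1,\dots,\pi_r$ works for every $q\in C$, since $(\pi_1\circ\cdots\circ\pi_r)(q)=\kappa(q)\in\kappa(C)=B$.

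For the converse, let $\Theta=\{\theta_1,\dots,\theta_t\}\subseteq\Pi$ satisfy the two conditions and let $S\subseteq\Sigma$ be the submonoid generated by $\Theta$; every element of $S$ maps $B$ onto $B$, being a composition of maps $\theta_i$ with $\theta_i(B)=B$. I would then construct $\kappa$ by processing the nodes $C=\{q_1,\dots,q_N\}$ one at a time, starting from $w_0:=\id\in S$. Suppose $w_k\in S$ has already been found with $w_k(q_j)\in B$ for all $j\leq k$; applying the second condition to the node $w_k(q_{k+1})\in C$ yields a word $u\in S$ with $u(w_k(q_{k+1}))\in B$, and we set $w_{k+1}:=u\circ w_k$. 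Then $w_{k+1}(q_{k+1})\in B$ by construction, while for $j\leq k$ we still have $w_{k+1}(q_j)=u(w_k(q_j))\in u(B)=B$ because $w_k(q_j)\in B$. After $N$ steps $\kappa:=w_N\in S\subseteq\Sigma$ satisfies $\kappa(C)\subseteq B$, and $\kappa(B)=B$ since $\kappa\in S$; therefore $B=\kappa(B)\subseteq\kappa(C)\subseteq B$, so $\kappa(C)=B$ and $B$ is a projection block.

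The step I expect to be the real crux is the finiteness lemma in the forward direction: a composition $f_1\circ\cdots\circ f_r$ of self-maps of a finite set $X$ that is surjective forces each $f_i$ to be a bijection, because $|X|=|(f_1\circ\cdots\circ f_r)(X)|\leq|(f_2\circ\cdots\circ f_r)(X)|\leq\cdots\leq|f_r(X)|\leq|X|$ must be a chain of equalities, and one then peels off the factors one by one. This is short, but it is genuinely the place where the finiteness of $\Sigma$ (hence of $B\subseteq C$) is used. The inductive construction in the converse is routine once the ``trapping'' property of $B$ is noticed, and the degenerate case $B=C$ (where $\Theta=\emptyset$ and the relevant words are empty) deserves a one-line mention but causes no trouble.
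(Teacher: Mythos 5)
Your proposal is correct and follows essentially the same route as the paper: the ``only if'' direction rests on the same observation that a composition of self-maps of the finite set $B$ which is onto $B$ forces every factor to be a bijection of $B$ (so the generators appearing in a word for $\kappa$ form the desired $\Theta$), and the ``if'' direction builds $\kappa$ by iteratively composing words in $\Theta$, using that such words map $B$ onto $B$ so earlier progress is never undone. The only difference is bookkeeping: you induct over the nodes $q_1,\dots,q_N$ of $C$, while the paper tracks the strictly shrinking set $\theta_I(C)\setminus B$; these are equivalent formulations of the same induction.
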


\begin{proof}
We fix the generating set $\Pi$ of $\Sigma$. First, we assume there exists a subset  $\Theta = \{\theta_i\}_{i=1}^t \subset \Pi$ such that the conditions of theorem \ref{pict} hold. We want to construct an element $\kappa \in \Sigma$ satisfying $\kappa(C) = B$ and $\kappa(B) = B$ as in definition \ref{blok}, and we will do so inductively. First, we note that if $\sigma \in \Sigma$ and $\tau \in \Sigma$  satisfy $\sigma(B) = B$ and $\tau(B) = B$, then we also have that $(\sigma \circ \tau)(B) = B$. Now, if $B$ exactly equals $C$ then $B$ is always a projection block, by setting $\kappa = \id$. Hence, we next assume that $B \not= C$ and choose a point $q_0 \in C \setminus B$. By assumption, there exists a sequence $\theta_I := \theta_{I_1} \dots \theta_{I_s}$ of elements in $\Theta$ such that $\theta_I(q_0) \in B$. It follows that $\theta_I(B) = B$, and so if $\theta_I(C) = B$ we are done by setting $\kappa = \theta_I$. Note that

 \begin{equation}\label{sets1}
 \theta_I(C) \setminus B = \theta_I(C \setminus (B \cup \{q_0\})) \setminus B \quad,
 \end{equation}
 by the fact that $\theta_I(B \cup \{q_0\}) = B$. From equation \eqref{sets1} it follows that 
 
 \begin{equation}
 \#(\theta_I(C) \setminus B ) \leq \# (C \setminus (B \cup \{q_0\})) < \#(C \setminus B ) \quad.
 \end{equation}
 Next, we choose an element  $q_1 \in \theta(C) \setminus B$ and a sequence $\theta_{\tilde{I}}$ such that $\theta_{\tilde{I}}(q_1) \in B$. It follows that 
 
 \begin{equation}\label{sets2}
(\theta_{\tilde{I}} \circ \theta_I)(C) \setminus B = \theta_{\tilde{I}}[\theta_I(C) \setminus (B \cup \{q_1\})] \setminus B \quad,
 \end{equation}
from which we again see that 

 \begin{equation}
 \#((\theta_{\tilde{I}} \circ \theta_I)(C) \setminus B ) \leq \# (\theta_I(C) \setminus (B \cup \{q_1\})) < \#(\theta_I(C) \setminus B )\quad.
 \end{equation}
For convenience, we will redefine $\theta_I$ to be $\theta_{\tilde{I}} \circ \theta_I$. Repeating this procedure, we get a sequence of sets $\theta_I(C) \setminus B$ strictly decreasing in size. Because $C$ only has finitely many elements, we eventually get $\theta_I(C) \setminus B = \emptyset$. Hence, we see that $\theta_I(C) \subset B$. Because we also have $\theta_I(B) = B$, it follows that $\theta_I(C) = B$. Therefore, setting $\kappa:= \theta_I$ we see that $B$ is indeed a projection block.\\
Conversely, if $B$ is a projection block, we may write $\kappa = \sigma_I := \sigma_{i_1} \circ \dots \circ \sigma_{i_s}$ for elements $\sigma_{j}$ in the generating set $\Pi$. It follows that $\sigma_I(q) \in B$ for every node $q \in C$. Hence, we may define $\Theta \subset \Pi$ to be the set of all $\sigma_j$ appearing in $\sigma_I$. It remains to show that $\sigma_j(B) = B$ for every $\sigma_j \in \Theta$. However, we are given that $\kappa|_B = \sigma_{I}|_B$ is a bijection from $B$ to itself $B$. Moreover, as any element of $\Sigma$ maps $B$ into itself, we may write  $\sigma_I|_B = (\sigma_{i_1} \circ \dots \circ \sigma_{i_s})|_B =  \sigma_{i_1}|_B \circ \dots \circ \sigma_{i_s}|_B$. From this it follows that all of the $\sigma_j|_B$ are bijections from $B$ to itself. This proves the theorem.
\end{proof}
\noindent Theorem \ref{pict} tells us that, in order to determine whether or not a block is a projection block, one only has to look at any set of generators for $\Sigma$. In particular, only at those elements of this set of generators that restrict to a bijection on $B$. The block is then a projection block if and only if this subset of generators connects every node to the block.\\
The following theorem gives the motivation for considering projection blocks in homogeneous coupled cell networks. We recall the setting. If $B$ is a projection block in a homogeneous coupled cell network $N$, then we will denote by $P$ the balanced partition corresponding to $B$ and by $N_P$ the corresponding reduced network. As usual,  let $(V^n, A_{\Sigma})$ denote the fundamental network of $N$ and $(V^m, A_{\Sigma_P})$ denote the fundamental network of $N_P$. As we have seen this latter space can be identified as the invariant synchrony space $\syn_{\pi_P}$ of the former. Furthermore, using a fully dependent cell $p \in C$ we have seen that we may retrieve the network vector fields of $N$ and $N_P$ respectively by restricting to the subspaces $\syn_{N,p} \subset (V^n, A_{\Sigma})$ and $\syn_{N_P,[p]}\subset (V^m, A_{\Sigma_P}) \cong \syn_{\pi_P}$.

\begin{thr}\label{main}
Let $B$ be a projection block in a homogeneous coupled cell network $N$. There exists a decomposition 

\begin{equation}
V^n = W \oplus W'
\end{equation}
into invariant spaces such that

\begin{equation}
W \cap \syn_{N,p} =  W \cap \syn_{N_P,[p]}
\end{equation}
and 
\begin{equation}
W' \cap \syn_{\pi_P} =  \syn_0 := \{X_{\sigma} = X_{\tau} \, \forall \sigma, \tau \in \Sigma_P\} \subset (V^m, A_{\Sigma_P})\, .
\end{equation}
\end{thr}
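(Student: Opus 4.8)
The plan is to exhibit the splitting $V^n = W \oplus W'$ explicitly using the monoid element $\kappa \in \Sigma$ that certifies $B$ as a projection block, and then to identify the two intersections directly in coordinates. Since $\kappa(C) = B$ and $\kappa(B) = B$, the restriction $\kappa|_B$ is a bijection of $B$, so for large enough $k$ we have $(\kappa^k)|_B = \mathrm{id}_B$; replacing $\kappa$ by such a power we may assume $\kappa|_B = \mathrm{id}_B$, hence $\kappa \circ \kappa = \kappa$ (as $\kappa(C) = B$ implies $\kappa$ lands in $B$, where it acts as the identity). Then $A_\kappa : V^n \to V^n$ is an equivariant idempotent: $A_\kappa A_\kappa = A_{\kappa \circ \kappa} = A_\kappa$. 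The plan is to take
\begin{equation}
W := \im A_\kappa = \ker(\id - A_\kappa), \qquad W' := \ker A_\kappa = \im(\id - A_\kappa).
\end{equation}
Both are invariant (kernel and image of an equivariant map), and $V^n = W \oplus W'$ since $A_\kappa$ is a projection. So the first part is cheap once one has the right $\kappa$.

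Next I would unwind the coordinates. Recall $(A_\kappa X)_\sigma = X_{\sigma \circ \kappa}$, so $X \in W = \im A_\kappa$ iff $X$ is fixed by $A_\kappa$, i.e. $X_\sigma = X_{\sigma \circ \kappa}$ for all $\sigma \in \Sigma$, while $X \in W' = \ker A_\kappa$ iff $X_{\sigma \circ \kappa} = 0$ for all $\sigma$. Now I compute $W' \cap \syn_{\pi_P}$: an element of $\syn_{\pi_P}$ is constant on the fibers $[\sigma] = [\tau]$, i.e. it descends to a function $\tau \mapsto X_\tau$ on $\Sigma_P = \Sigma_P$; being additionally in $\ker A_\kappa$ forces $X_{\sigma\circ\kappa} = 0$ for all $\sigma$, and since $\kappa(C) = B$ one checks $[\sigma \circ \kappa] = [\kappa]$ is independent of $\sigma$ (both send everything into $B = [B]$), and in fact $[\sigma \circ \kappa]$ ranges over... here one must be slightly careful: the point is that $\sigma \mapsto [\sigma \circ \kappa]$, as $\sigma$ ranges over $\Sigma$, has image all of $\Sigma_P$ because $[\kappa]$ is a left-identity-like element on $C_P$ (since $\kappa$ followed by anything still collapses $B$, and $[\kappa]$ is the constant map to $[B]$, which is the unit-analog after quotienting by $B$). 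So the condition $X_{\sigma\circ\kappa} = 0\ \forall\sigma$ becomes $X_\tau = 0$ for all $\tau \in \Sigma_P$ — i.e. $W' \cap \syn_{\pi_P} = \{0\}$. But wait: the claimed right-hand side is $\syn_0 = \{X_\sigma = X_\tau\ \forall \sigma,\tau\}$, the *full diagonal*, not $\{0\}$. So I must instead be quotienting appropriately: the correct reading is that in $\syn_{\pi_P} \cong (V^m, A_{\Sigma_P})$, the element $[\kappa] \in \Sigma_P$ is the constant map $C_P \to \{[B]\}$, and $A_{[\kappa]}$ projects $\syn_{\pi_P}$ onto exactly its diagonal $\syn_0$ (since $(A_{[\kappa]}X)_{[\tau]} = X_{[\tau]\circ[\kappa]} = X_{[\kappa]}$, a constant). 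Thus $W' \cap \syn_{\pi_P} = \ker A_\kappa \cap \syn_{\pi_P}$ should be reconsidered — more likely $W'$ is defined as $\im(\id - A_\kappa)$ and one shows $W' \supset$ or the decomposition is arranged so that the part of $\syn_{\pi_P}$ killed by $\id - A_\kappa$... I would recheck the sign: taking instead $W := \ker A_\kappa$-complement structure carefully so that $W'\cap\syn_{\pi_P}$ is the *diagonal*, which is $\im(A_\kappa|_{\syn_{\pi_P}})$. So the clean statement is: $A_\kappa$ restricted to $\syn_{\pi_P}$ is the projection onto $\syn_0$, and one sets $W' := \im A_\kappa$, $W := \ker A_\kappa$; then $W' \cap \syn_{\pi_P} = \syn_0$ is immediate.

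With $W = \ker A_\kappa$, the remaining task is $W \cap \syn_{N,p} = W \cap \syn_{N_P,[p]}$, where $\syn_{N,p} = \{X_\sigma = X_\tau\ \text{if}\ \sigma(p) = \tau(p)\}$ and (via the identification of Remark \ref{synspac}) $\syn_{N_P,[p]}$ corresponds to $\{X_\sigma = X_\tau\ \text{if}\ [\sigma(p)] = [\tau(p)]\}$. Since the latter imposes *more* identifications than the former, $\syn_{N_P,[p]} \subset \syn_{N,p}$ always, so one inclusion $\supseteq$ is free. For the reverse: suppose $X \in W \cap \syn_{N,p}$ and $\sigma(p), \tau(p)$ lie in the same block-class, meaning either $\sigma(p) = \tau(p)$ (done by $\syn_{N,p}$) or both $\sigma(p), \tau(p) \in B$. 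In the latter case, because $\kappa|_B = \mathrm{id}_B$ and $p$ is fully dependent, pick $\rho_1, \rho_2 \in \Sigma$ with $\rho_1(p) = \sigma(p)$, $\rho_2(p) = \tau(p)$; apply the membership $X \in \ker A_\kappa$... actually the right tool is: $X \in \syn_{N,p}$ means $X$ is pulled back from a function on the orbit $\{\sigma(p)\} = C$, i.e. $X_\sigma = (x)_{\sigma(p)}$ for some $x \in V^{|C|}$; and $X \in \ker A_\kappa$ gives $0 = X_{\sigma\circ\kappa} = x_{\sigma(\kappa(p))} = x_{\kappa(p)}$... hmm, this needs the observation that $\kappa(p) \in B$ and that *every* element of $B$ arises as $\sigma(\kappa(p))$ for suitable $\sigma$ (since $\kappa(p)\in B$ and $B$ is a block acted on transitively-enough — in fact $\kappa|_B$ surjects $B$ onto $B$, so $\{\rho(\kappa(p)) : \rho\} \supseteq \kappa(B) = B$). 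Therefore $X \in \ker A_\kappa$ forces $x_b = 0$ for all $b \in B$, so in particular $x_{\sigma(p)} = x_{\tau(p)} = 0$ whenever $\sigma(p), \tau(p) \in B$; hence $X_\sigma = X_\tau$, giving $X \in \syn_{N_P,[p]}$.

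The main obstacle I anticipate is bookkeeping around the two competing identifications of $N_P$ inside $(V^n, A_\Sigma)$ (the content of Remark \ref{synspac}) together with the sign/orientation choice in defining $W$ versus $W'$ so that $W' \cap \syn_{\pi_P}$ lands on the diagonal $\syn_0$ rather than on $\{0\}$; getting $\kappa$ with $\kappa|_B = \mathrm{id}_B$ (by passing to a power) and verifying $\kappa \circ \kappa = \kappa$ is the enabling technical point, and after that everything is a coordinate chase using $(A_\sigma X)_\tau = X_{\tau\circ\sigma}$, the block property $\sigma(B) \subseteq B$, and full dependence of $p$.
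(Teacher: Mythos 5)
Your overall strategy is the paper's (produce an idempotent from the projection block, split $V^n$ along the associated projection, then do a coordinate chase), and your construction of the idempotent $\kappa$ with $\kappa|_B=\id_B$, $\kappa\circ\kappa=\kappa$ is correct and is essentially Lemma \ref{idemp}. The genuine gap is in the operator you split with. You use $A_\kappa$, i.e. $(A_\kappa X)_\sigma = X_{\sigma\circ\kappa}$, and call it an equivariant idempotent. Idempotency is fine, but equivariance would mean $A_\sigma A_\kappa = A_\kappa A_\sigma$, i.e. $A_{\sigma\circ\kappa}=A_{\kappa\circ\sigma}$, i.e. $\sigma\circ\kappa=\kappa\circ\sigma$ for every $\sigma$ -- false unless $\kappa$ is central in $\Sigma$. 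Hence $\ker A_\kappa$ and $\im A_\kappa$ need not be $\Sigma$-invariant, so your $V^n=W\oplus W'$ is not a decomposition into invariant spaces, which is what the theorem asserts. The same left/right mix-up sinks the coordinate chase for $W\cap\syn_{N,p}\subseteq W\cap\syn_{N_P,[p]}$: writing $X_\sigma=x_{\sigma(p)}$, the condition $X\in\ker A_\kappa$ only says that $x$ vanishes on the $\Sigma$-orbit $\{\sigma(\kappa(p))\}$ of the single point $\kappa(p)$, and your claim that this orbit contains $B$ conflates $\{\sigma(\kappa(p)):\sigma\in\Sigma\}$ with $\{\kappa(\sigma(p)):\sigma\in\Sigma\}=\kappa(C)=B$. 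Concretely: let $C=\{p,q,b_1,b_2\}$, $B=\{b_1,b_2\}$, and let $\Sigma$ be generated by $\alpha$ (sending $p\mapsto q$ and fixing $q,b_1,b_2$) and $\kappa$ (sending $p\mapsto b_1$, $q\mapsto b_2$ and fixing $b_1,b_2$). Then $\Sigma=\{\id,\alpha,\kappa,\kappa\circ\alpha\}$, $p$ is fully dependent and $B$ is a projection block, but every element of $\Sigma$ fixes $b_1=\kappa(p)$; one checks $\ker A_\kappa=\{X:X_\kappa=0\}$, which is not $A_\alpha$-invariant, and for which $\ker A_\kappa\cap\syn_{N,p}=\{X_\kappa=0\}$ strictly contains $\ker A_\kappa\cap\syn_{N_P,[p]}=\{X_\kappa=X_{\kappa\circ\alpha}=0\}$. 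So the argument does not just have a cosmetic sign issue; with $A_\kappa$ both conclusions can fail.

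The repair is exactly the paper's Lemma \ref{idemproj}: use left translation $(B_\iota X)_\sigma:=X_{\iota\circ\sigma}$ with $\iota$ the idempotent. This map commutes with every $A_\sigma$ (both compositions give $X_{\iota\circ\tau\circ\sigma}$), so $W:=\ker B_\iota=\{X_{\iota\circ\sigma}=0\ \forall\sigma\}$ and $W':=\im B_\iota=\{X_\sigma=X_\tau \text{ if } \iota\circ\sigma=\iota\circ\tau\}$ are genuinely invariant, and your intended coordinate chase then works in the corrected coordinates: if $\sigma(p)\in B$ then $\iota(\sigma(p))=\sigma(p)$, so $X\in W\cap\syn_{N,p}$ gives $X_\sigma=X_{\iota\circ\sigma}=0$ (this replaces the false orbit claim); and since $[\iota\circ\sigma]=[\iota\circ\tau]$ (both are the constant map onto $[B]$), any $X\in W'\cap\syn_{\pi_P}$ satisfies $X_\sigma=X_{\iota\circ\sigma}=X_{\iota\circ\tau}=X_\tau$, giving $W'\cap\syn_{\pi_P}=\syn_0$. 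Note that in the ring feed-forward example of Section 7 the monoid is commutative, so there $A_{\sigma^T}=B_{\sigma^T}$; that special case is presumably what made the right-translation choice look harmless.
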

\noindent To prove theorem \ref{main} we first need two lemmas. The first one gives a better motivation for the name 'projection block'.

\begin{lem}\label{idemp}
Let $B$ be a block in a homogeneous coupled cell network $N$ with monoid $\Sigma$. $B$ is a projection block if and only if there exists an element $\iota \in \Sigma$ such that $\iota$ is idempotent, i.e $\iota \circ \iota =\iota$, and such that $\iota(C) = B$.
\end{lem}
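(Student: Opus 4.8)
\textbf{Proof plan for Lemma \ref{idemp}.}

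The plan is to prove both directions using the finiteness of $\Sigma$ together with the definitional fact (already recorded in the proof of Theorem \ref{pict}) that every $\sigma \in \Sigma$ maps $B$ into itself. The ``if'' direction is immediate: if $\iota \in \Sigma$ is idempotent with $\iota(C) = B$, then in particular $\iota(B) \subseteq \iota(C) = B$, and conversely $\iota(B) \supseteq \iota(\iota(C)) = \iota(C) = B$, so $\iota(B) = B$; hence $\iota$ witnesses that $B$ is a projection block in the sense of Definition \ref{blok}, with $\kappa = \iota$.

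For the ``only if'' direction, suppose $B$ is a projection block, so there exists $\kappa \in \Sigma$ with $\kappa(C) = B$ and $\kappa(B) = B$. The key observation is that since $\Sigma$ is a finite monoid, the element $\kappa$ has a finite \emph{index} and \emph{period}: the sequence $\kappa, \kappa^2, \kappa^3, \dots$ is eventually periodic, so there are integers $r \geq 1$ and $d \geq 1$ with $\kappa^{r+d} = \kappa^r$. Then for a suitable multiple $kd \geq r$ the element $\iota := \kappa^{kd}$ satisfies $\iota \circ \iota = \kappa^{2kd} = \kappa^{kd} = \iota$ (this is the standard fact that in a finite monoid some power of any element is idempotent); one takes $k$ large enough that $kd$ is a multiple of $d$ lying in the eventually-periodic range, so that $\kappa^{2kd} = \kappa^{kd}$. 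It then remains to check that $\iota(C) = B$. Since $\kappa(B) = B$ we get $\kappa^j(B) = B$ for every $j \geq 1$ by induction, so $\iota(B) = B$; and since $\kappa(C) = B$ we get $\iota(C) = \kappa^{kd}(C) = \kappa^{kd-1}(\kappa(C)) = \kappa^{kd-1}(B) = B$ using $\kappa^{kd-1}(B) = B$ (valid because $kd - 1 \geq 1$, which holds as soon as $kd \geq 2$). Thus $\iota$ is the desired idempotent.

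The only genuinely non-routine point is the invocation of the ``some power is idempotent'' fact for finite monoids, and making sure the chosen exponent $kd$ is simultaneously (i) a multiple of the period $d$, (ii) at least the index $r$, so that idempotency holds, and (iii) at least $2$, so that $\iota(C) = B$ follows. All three can be arranged by taking $k$ sufficiently large, so I expect no real obstacle; the argument is short and self-contained once this standard semigroup fact is in hand. I would state that fact explicitly (with the one-line proof via eventual periodicity of $\{\kappa^j\}_{j \geq 1}$) rather than merely cite it, to keep the lemma's proof elementary.
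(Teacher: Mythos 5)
Your proposal is correct and takes essentially the same route as the paper: for the ``only if'' direction the paper likewise uses eventual periodicity of the powers of $\kappa$ in the finite monoid $\Sigma$ to produce an idempotent power $\iota = \kappa^{sN}$, and gets $\iota(C)=B$ from the fact that every positive power of $\kappa$ maps $C$ onto $B$. For the converse the paper deduces $\iota(B)=B$ by showing $\iota$ fixes each $b\in B$ pointwise via $\iota(b)=\iota^2(c)=\iota(c)=b$, which is the same observation you make at the level of images; there are no gaps in your argument.
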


\begin{proof}
First we assume $B$ is a projection block. Let $\kappa \in \Sigma$ be an element satisfying $\kappa(C) = \kappa(B) =  B$. It follows that $\kappa^l(C) =  \kappa^l(B) =  B$ for all $l \in \N_{>0}$, where we have set $\kappa^l := \kappa \circ \dots \circ \kappa$ ($l$ times). Next, because $\Sigma$ is finite, it follows that there exist constants $M,N \in \N_{>0}$ such that $\kappa^M = \kappa^{M+N}$. From this we see that $\kappa^{M'} = \kappa^{M'+N}$ for all $M' \geq M$. In particular, choosing $s \in \N_{>0}$ such that $sN \geq M$ we see that $\kappa^{sN} = \kappa^{(s+1)N} = \dots \kappa^{2sN}$. Hence, setting $\iota:=\kappa^{sN}$ we see that indeed $\iota \circ \iota =\iota$ and $\iota(C) = B$. \\
Conversely, $\iota$ satisfies $\iota(C) = B$. Hence, for any element $b \in B$ there exists an element $c \in C$ such that $\iota(c) = b$. It follows that $\iota(b) = \iota^2(c) = \iota(c) = b$. From this we conclude that $\iota(B) = B$. Setting $\iota = \kappa$ then proves that $B$ is a projection block, which proves the theorem.
\end{proof}
\noindent The next lemma states that an idempotent element in a monoid gives rise to a splitting of the regular representation into two invariant spaces.

\begin{lem}\label{idemproj}
Let $\iota$ be an idempotent element of a monoid $\Sigma$ and let $(V^n, A_{\Sigma})$ be the regular representation of $\Sigma$. The map \\
$B_{\iota}: (V^n, A_{\Sigma}) \rightarrow (V^n, A_{\Sigma})$ defined by $(B_{\iota}X)_{\sigma} = X_{\iota \circ \sigma}$ for $\sigma \in \Sigma$ is an equivariant projection.
\end{lem}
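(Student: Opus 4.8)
The plan is to verify directly the two defining properties of $B_\iota$: that it is equivariant (i.e.\ $B_\iota \circ A_\sigma = A_\sigma \circ B_\iota$ for all $\sigma \in \Sigma$) and that it is a projection (i.e.\ $B_\iota \circ B_\iota = B_\iota$), using only the idempotency relation $\iota \circ \iota = \iota$ and the defining formula $(A_\sigma X)_\tau = X_{\tau \circ \sigma}$ for the regular representation. Both are short index computations.

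First I would check equivariance. Fix $\sigma, \tau \in \Sigma$ and an arbitrary $X \in V^n = \bigoplus_{\rho \in \Sigma} V$. On one hand, $((B_\iota \circ A_\sigma)X)_\tau = (B_\iota (A_\sigma X))_\tau = (A_\sigma X)_{\iota \circ \tau} = X_{(\iota \circ \tau)\circ \sigma}$. On the other hand, $((A_\sigma \circ B_\iota)X)_\tau = (A_\sigma (B_\iota X))_\tau = (B_\iota X)_{\tau \circ \sigma} = X_{\iota \circ (\tau \circ \sigma)}$. By associativity of the monoid multiplication these agree, so $B_\iota \circ A_\sigma = A_\sigma \circ B_\iota$ for every $\sigma$, which is exactly equivariance. (Linearity of $B_\iota$ is immediate, since it just permutes the summands $V$ of $\bigoplus_{\rho \in \Sigma} V$ according to $\rho \mapsto \iota \circ \rho$ and copies their entries, so it is induced by a linear map on index space.)

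Next I would check idempotency of $B_\iota$ as an operator. For any $X$ and any $\sigma \in \Sigma$ we compute $((B_\iota \circ B_\iota)X)_\sigma = (B_\iota(B_\iota X))_\sigma = (B_\iota X)_{\iota \circ \sigma} = X_{\iota \circ (\iota \circ \sigma)} = X_{(\iota \circ \iota)\circ \sigma} = X_{\iota \circ \sigma} = (B_\iota X)_\sigma$, where the fourth equality uses associativity and the fifth uses the hypothesis $\iota \circ \iota = \iota$. Hence $B_\iota^2 = B_\iota$, so $B_\iota$ is a projection, and combined with equivariance it is an equivariant projection, as claimed.

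There is no real obstacle here: the statement is essentially a bookkeeping lemma and everything follows from associativity together with the single relation $\iota \circ \iota = \iota$. The only point to be a little careful about is the direction in which $\iota$ is composed — one must use the right-multiplication convention $(A_\sigma X)_\tau = X_{\tau \circ \sigma}$ consistently and note that $B_\iota$ acts by \emph{left} multiplication by $\iota$ on indices, which is exactly why it commutes with all the $A_\sigma$ (right multiplications). The image of $B_\iota$ is then an invariant complementable subspace of $(V^n, A_\Sigma)$, which is what will be exploited in the proof of Theorem \ref{main}.
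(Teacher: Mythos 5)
Your proposal is correct and follows essentially the same route as the paper: both verify $B_\iota^2 = B_\iota$ via $X_{\iota\circ\iota\circ\sigma} = X_{\iota\circ\sigma}$ and verify equivariance by computing $(A_\sigma B_\iota X)_\tau = X_{\iota\circ\tau\circ\sigma} = (B_\iota A_\sigma X)_\tau$, using only associativity and idempotency of $\iota$. The only difference is the order of the two checks, which is immaterial.
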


\begin{proof}
First we show that the map $B_{\iota}$ is a projection. Because $\iota$ is idempotent it follows that

\begin{equation}
\begin{split}
&((B_{\iota})^2X)_{\sigma} = (B_{\iota}(B_{\iota}(X)))_{\sigma} = (B_{\iota}(X))_{\iota \circ \sigma} =\\ 
&X_{\iota \circ \iota \circ \sigma} = X_{\iota \circ \sigma} =  (B_{\iota}X)_{\sigma} \quad,
\end{split}
\end{equation} 
for all $\sigma \in \Sigma$ and $X \in V^n$. \\
Next we show equivariance. For all $\sigma, \tau \in \Sigma$ and $X \in V^n$ we have 

\begin{equation}
\begin{split}
(A_{\sigma}(B_{\iota}(X)))_{\tau} = (B_{\iota}(X))_{\tau \circ \sigma} = X_{\iota \circ \tau \circ \sigma} \quad, \\
(B_{\iota}(A_{\sigma}(X)))_{\tau} = (A_{\sigma}(X))_{\iota \circ \tau} = X_{\iota \circ \tau \circ \sigma} \quad.
\end{split}
\end{equation} 
From this we see that $A_{\sigma}B_{\iota} = B_{\iota} A_{\sigma}$ for all $\sigma \in \Sigma$, and hence that $B_{\iota}$ is an equivariant map. This proves the claims of the lemma.
\end{proof}

\begin{remk}\label{splitt}
Recall that an equivariant projection $P$ on a representation space $U$ gives rise to a decomposition  
$U = \im(P) \oplus \ker(P)$ into invariant subspaces. In particular, for the map $B_{\iota}$ we get a decomposition into the invariant spaces

\begin{equation}
\im(B_{\iota}) = \{X_{\sigma} = X_{\tau} \text{ if } \iota \circ \sigma = \iota \circ \tau \} \quad,
\end{equation}
and

\begin{equation}
\ker(B_{\iota}) = \{X_{\iota \circ \sigma} = 0 \text{ } \forall \sigma \in \Sigma\} \quad.
\end{equation}
These will be important in the proof of theorem \ref{main}.
\hfill $\triangle$
\end{remk}


\begin{proof}[Proof of theorem \ref{main}]
By lemma \ref{idemp}, there exists an idempotent element $\iota \in \Sigma$ such that $\iota(C) = B$. It follows then from lemma \ref{idemproj} and remark \ref{splitt} that we get a decomposition of the regular representation $(V^n, A_\Sigma)$ into the invariant spaces 

\[W:= \ker(B_{\iota}) = \{X_{\iota \circ \sigma} = 0 \text{ } \forall \sigma \in \Sigma\} \quad, \]
and

\[W':=  \im(B_{\iota}) = \{X_{\sigma} = X_{\tau} \text{ if } \iota \circ \sigma = \iota \circ \tau \} \quad.\]
We will start by showing that 

\begin{equation}
W \cap \syn_{N,p} =  W \cap \syn_{N_P,[p]}
\end{equation}
It follows from remark \ref{NPpi} that 

\begin{equation}
 \syn_{N_P,[p]} =  \syn_{N_P,[p]} \cap  \syn_{\pi_P} =  \syn_{P} \cap \syn_{N,p}\, .
\end{equation}
Hence we see that

\begin{equation}
W \cap \syn_{N_P,[p]} \subset W \cap \syn_{N,p}  \, .
\end{equation}
Conversely, we note that
\begin{equation}\label{p1}
W \cap \syn_{N,p} = \{X_{\iota \circ \sigma} = 0 \text{ } \forall \sigma \in \Sigma\} \cap \{X_{\sigma} = X_{\tau} \text{ if } \sigma(p) = \tau(p)\} \, ,
\end{equation}
and 

\begin{equation}\label{p1dash}
W \cap  \syn_{N_P,[p]}= \{X_{\iota \circ \sigma} = 0 \text{ } \forall \sigma \in \Sigma\} \cap \{X_{\sigma} = X_{\tau} \text{ if } [\sigma(p)] = [\tau(p)]\} \, .
\end{equation}
To show that the space \ref{p1} is contained in \ref{p1dash}, we assume $X$ is an element in \ref{p1} and that $\sigma, \tau \in \Sigma$ are such that $ [\sigma(p)] = [\tau(p)]$. We then need to show that $X_{\sigma} = X_{\tau}$. There are two options. First of all it may be that $\sigma(p), \tau(p) \notin B$. Because the only partition class in $P$ possibly containing more than one node is $B$, we see that the equality  $[\sigma(p)] = [\tau(p)]$ implies $\sigma(p) = \tau(p)$. From this and the fact that $X \in \syn_{N,p}$ we conclude that indeed $X_{\sigma} = X_{\tau}$. \\
Next, we assume that $\sigma(p), \tau(p) \in B$. Because it will in general not hold that $\sigma(p) = \tau(p)$, we may not use  $X \in \syn_{N,p}$ to conclude that $X_{\sigma} = X_{\tau}$. Instead, we will show that $\sigma(p) \in B$ and $X \in W \cap \syn_{N,p}$ together imply that $X_{\sigma} = 0$. From this we then get $X_{\sigma} = 0 = X_{\tau}$ whenever $\sigma(p), \tau(p) \in B$, proving that indeed $X$ is an element of $W \cap \syn_{N_P,[p]}$.\\
Therefore, let $\sigma \in \Sigma$ be such that $b:=\sigma(p) \in B$. Because the map $\iota$ satisfies $\iota(C) = B$, there exists an element $c \in C$ such that $\iota(c) = b$. Applying $\iota$ to both sides and using that $\iota$ is idempotent we get $\iota(b) = \iota^2(c) = \iota(c) = b$. Hence we see that $\sigma(p) = (\iota \circ \sigma)(p)$. By the fact that $X \in \syn_{N,p}$ we see that $X_{\sigma} = X_{\iota \circ \sigma}$. However, by the fact that $X \in W$ it also follows that $X_{\iota \circ \sigma} = 0$. We conclude that indeed $X_{\sigma} = 0$.\\

\noindent Next we want to show that 

\begin{equation}
W'  \cap \syn_{\pi_P} = \syn_{0} \,,
\end{equation}
where $\syn_{0} := \{X_{\sigma} = X_{\tau}\text{ } \forall \sigma, \tau \in \Sigma\}$ is the fully synchronous space. First of all, because $W'$ and  $\syn_{\pi_P}$ are both synchrony spaces, it follows that 

\begin{equation}
W'  \cap \syn_{\pi_P} \supset \syn_{0} \quad.
\end{equation}
It remains to show that 
 
\begin{equation}
W'  \cap \syn_{\pi_P} \subset \syn_{0} \quad,
\end{equation}
hence that for all $X \in W'  \cap \syn_{\pi_P}$ we have $X_{\sigma} = X_{\tau}$ for all $\sigma, \tau \in \Sigma$. Therefore, let $\sigma$ and $\tau$ be given. From the identity $\iota \circ (\iota \circ \sigma) = \iota \circ (\sigma)$, we conclude that $X \in W'$ implies $X_{\sigma} = X_{\iota \circ \sigma}$. Likewise we find that  $X_{\tau} = X_{\iota \circ \tau}$. Next, by the fact that $\iota(C) = B$ we conclude that $[\iota] \in \Sigma_P$ is the function that sends every node in $N_P$ to the node $[B] \in N_P$. Since this holds equally well for $[\iota \circ \sigma]$ and $[\iota \circ \tau]$, we conclude that in fact $[\iota \circ \sigma] = [\iota \circ \tau]$. Finally it follows from $X \in  \syn_{\pi_P}$ that $ X_{\iota \circ \sigma} =  X_{\iota \circ \tau}$ and hence that $X_{\sigma} = X_{\tau}$, proving that indeed $X \in \syn_{0}$. This concludes the proof. 
\end{proof}

\begin{remk}
It follows from theorem \ref{submon} that the space $(V^m, A_{\Sigma_P}) \cong \syn_{\pi_P}$ admits a decomposition into invariant spaces

\begin{equation}
V^m = (W  \cap \syn_{\pi_P} )\oplus (W'  \cap \syn_{\pi_P}) = (W  \cap \syn_{\pi_P}) \oplus \syn_{0} \,.
\end{equation}
The space $ \syn_{0} $ can furthermore be decomposed into $\dim(V)$ indecomposable representations of $\Sigma_P$, on which this monoid acts trivially. Furthermore, if $U \subset  (V^m, A_{\Sigma_P}) $ is any indecomposable (complementable) representation, then $U$ is isomorphic to some indecomposable component of either $W  \cap \syn_{\pi_P}$ or $\syn_{0}$. Any such isomorphism can be expanded to an equivariant linear map from $(V^m, A_{\Sigma_P})$ to itself, for example by letting this map vanish on some complement of $U$. Since this map has the structure of a network map, it must send synchrony spaces to themselves. From this we conclude that if  $U \cap \syn_{0} = \{0\}$ then $U$ is isomorphic to some component of $W  \cap \syn_{\pi_P}$. Conversely, if  $U \cap \syn_{0} \not= \{0\}$ then it is isomorphic to a subspace of $\syn_{0}$, hence equal to a $1$-dimensional subspace of $\syn_{0}$. \hfill $\triangle$
\end{remk}
\noindent For the following corollary we note that any network vector field for the network $N_P$ can be realized as the restriction to $\syn_{N_P,[p]}$ of a network vector field on $(V^m, A_{\Sigma_P})$ and hence of a network vector field on $(V^n, A_{\Sigma})$. Restricting such a lifted vector field on $(V^n, A_{\Sigma})$ to $\syn_{N, p}$, we furthermore see that a network vector field for $N_P$ can always be seen as the restriction of a network vector field for the original network $N$. Similarly, any smooth family of network vector fields for $N_P$ can be lifted to a smooth family of network vector fields for $N$, on $(V^m, A_{\Sigma_P})$ and on $(V^n, A_{\Sigma})$. 

\begin{cor}\label{main2}
Let $N$ be a homogeneous coupled cell network with projection block $B$ and corresponding quotient network $N_P$.  Let $\gamma_f:  V^{\# C_P} \times \Omega \rightarrow V^{\# C_P}$ be a family of smooth network vector fields for the network $N_P$, indexed by $\Omega \subset \R^k$ with $0 \in \Omega$. Suppose furthermore that $\gamma_f$ satisfies $\gamma_f(0,0) = 0$ and suppose the center subspace $W_c$ of the linearization $D_x\gamma_f(0,0)$ satisfies $W_c \cap \syn_0 = \{0\}$. We denote by $\Lambda_f$ the set of smooth network vector fields for $N$, $\tilde{\gamma}_g: V^{\#C} \times \Omega \rightarrow V^{\#C}$, such that $\gamma_f = \tilde{\gamma}_g|_{\syn_{P}\times \Omega}$. Then, there exists an open dense set $U$ of $\Lambda_f$ such that for all $\tilde{\gamma}_g \in U$ it holds that the locally defined center manifold of $\gamma_f$ around the origin is a local center manifold for $\tilde{\gamma}_g$ around the origin. In particular, any (local) bifurcation occurring in $\gamma_f$ is then a bifurcation occurring in $\tilde{\gamma}_g$, without any additional bounded solutions appearing in this latter system.  
\end{cor}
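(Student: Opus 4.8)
\emph{Overview of the strategy.} The plan is to reduce the statement to a finite–dimensional, purely linear genericity question — namely that the center subspace of $D_x\tilde{\gamma}_g(0,0)$ generically does not grow — and then to settle that question using Theorem \ref{main} together with the structure theory of Section 3.

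\emph{Reduction to the center subspace.} First pass to the suspended systems (append $\dot{\lambda}=0$). Since $\syn_P\subseteq V^{\#C}$ is a robust synchrony space it is flow–invariant for every $\tilde{\gamma}_g\in\Lambda_f$, and $\tilde{\gamma}_g|_{\syn_P}=\gamma_f$; in particular $\tilde{\gamma}_g(0,0)=0$, the subspace $T_0\syn_P$ is $D_x\tilde{\gamma}_g(0,0)$–invariant, and $D_x\tilde{\gamma}_g(0,0)|_{T_0\syn_P}=D_x\gamma_f(0,0)$. Hence $W_c(D_x\tilde{\gamma}_g(0,0))\cap T_0\syn_P=W_c(D_x\gamma_f(0,0))$, so the equality $W_c(D_x\tilde{\gamma}_g(0,0))=W_c(D_x\gamma_f(0,0))$ is equivalent to $W_c(D_x\tilde{\gamma}_g(0,0))\subseteq T_0\syn_P$. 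If this holds, the local center manifold $M_c\subseteq\syn_P$ of $\gamma_f$ is $\tilde{\gamma}_g$–invariant and tangent at the origin to $W_c(D_x\gamma_f(0,0))=W_c(D_x\tilde{\gamma}_g(0,0))$, hence (of the matching dimension) is a local center manifold for $\tilde{\gamma}_g$; as a center manifold contains every sufficiently small bounded orbit and governs the local bifurcations, all small bounded solutions of $\tilde{\gamma}_g$ lie in $\syn_P$ and are small bounded solutions of $\gamma_f$, which gives the ``in particular''. So it suffices to produce an open dense $U\subseteq\Lambda_f$ on which $W_c(D_x\tilde{\gamma}_g(0,0))\subseteq T_0\syn_P$. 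This condition depends on $\tilde{\gamma}_g$ only through $L:=D_x\tilde{\gamma}_g(0,0)$, which ranges exactly over the finite–dimensional affine space $\mathcal{L}_f$ of linear network vector fields for $N$ restricting to $A:=D_x\gamma_f(0,0)$ on $\syn_P$ (one adds to a fixed lift any linear network vector field for $N$ vanishing on $\syn_P$). The $1$–jet map $\Lambda_f\to\mathcal{L}_f$ is continuous, affine, surjective and has a continuous affine section, hence is open, so $U$ can be taken to be the preimage of $\mathcal{G}:=\{L\in\mathcal{L}_f:W_c(L)\subseteq T_0\syn_P\}$ and it is enough to prove $\mathcal{G}$ is open and dense. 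Writing $\bar{L}$ for the map induced by $L$ on $\bar{V}:=T_0\syn_{N,p}/T_0\syn_P$ (well defined, $\syn_P=\syn_{N_P,[p]}$ being robust), membership in $\mathcal{G}$ is exactly hyperbolicity of $\bar{L}$, an open condition; for density it suffices to exhibit a single $L^{*}$ with $\bar{L}^{*}$ hyperbolic, since the non–hyperbolic set is the preimage of a proper real–algebraic subvariety under the affine map $L\mapsto\bar L$.

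\emph{Constructing $L^{*}$ via Theorem \ref{main}.} Lift to $(V^n,A_\Sigma)$ and take the splitting $V^n=W\oplus W'$ of Theorem \ref{main}; combined with $W\cap\syn_{N,p}=W\cap\syn_{N_P,[p]}$ and $W'\cap\syn_{\pi_P}=\syn_0$ this yields $\bar{V}\cong(W'\cap\syn_{N,p})/\syn_0$. Decompose $W$ and $W'$ into indecomposables and group the summands as $V^n=V^n_\sharp\oplus V^n_\flat$, where $V^n_\flat$ is the sum of those with zero intersection with $\syn_{\pi_P}$ (so $V^n_\flat\cap\syn_{N_P,[p]}\subseteq V^n_\flat\cap\syn_{\pi_P}=\{0\}$) and $V^n_\sharp$ the sum of the rest (so $\syn_{\pi_P}\subseteq V^n_\sharp$ and $\syn_{N_P,[p]}\subseteq V^n_\sharp$); both are invariant and complementable. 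The key point, following from Theorem \ref{main}: the indecomposable summands contained in $W'$ that meet $\syn_{\pi_P}$ are mutually isomorphic and meet it in a single trivial line inside $\syn_0$; since $\homc_\Sigma$ of such a summand preserves that line it admits a nonzero ring homomorphism to $\R$, hence is of real type, and an equivariant endomorphism of it vanishing on $\syn_0$ lies in $\nil_\Sigma$. Consequently every perturbation of $L$ inside $\mathcal{L}_f$ acts nilpotently on the $W'$–isotypic block of $V^n_\sharp$, which carries the only possibly non–nilpotent part of the $\bar V_\sharp$–diagonal block of $\bar L$; so the eigenvalues of that diagonal block are pinned to those of $L|_{\syn_0}=A|_{\syn_0}$, which lie off the imaginary axis precisely by the hypothesis $W_c(A)\cap\syn_0=\{0\}$. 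I would therefore set $L^{*}:=L_0+c\,P_{V^n_\flat}|_{\syn_{N,p}}$, with $L_0$ a fixed lift of $A$ (obtained through Lemma \ref{ext} at the fundamental–network level), $P_{V^n_\flat}$ the equivariant projection onto $V^n_\flat$ (which vanishes on $\syn_{N_P,[p]}\subseteq V^n_\sharp$, so $L^{*}\in\mathcal{L}_f$), and $|c|$ large: a Schur–complement estimate then splits the spectrum of $\bar{L}^{*}$ into a group with real part $\approx c$ and a group converging, as $|c|\to\infty$, to the pinned eigenvalues of the $\bar V_\sharp$–diagonal block of $\bar L_0$, so $\bar L^{*}$ is hyperbolic once $|c|$ is large enough.

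\emph{Main obstacle.} The delicate step is this last one: inferring hyperbolicity of $\bar L$ from ``the pinned block is hyperbolic and the freely perturbed directions are hyperbolic'', since $\bar L$ need not respect the splitting $\bar V_\sharp\oplus\bar V_\flat$ and the indecomposable decomposition is not canonical. That is exactly what the large–parameter (singular–perturbation) argument is for, and it works only because the pinned eigenvalues are genuinely insensitive to the admissible perturbations — which is where Theorem \ref{main} (that $W'$ meets $\syn_{\pi_P}$ only in $\syn_0$) and the hypothesis $W_c(A)\cap\syn_0=\{0\}$ enter together. One must also check that the nilpotency and eigenvalue–pinning arguments are run on the summands inside $W'$ only (those inside $W$ contribute nothing to $\bar V$), and that the real–type conclusion and the matrix–over–a–nilpotent–ideal argument survive when $\dim V>1$; and that the transfer from the finite–dimensional statement on $\mathcal{L}_f$ back to an open dense subset of the infinite–dimensional $\Lambda_f$ is justified by openness of the $1$–jet map.
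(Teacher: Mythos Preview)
Your reduction is sound, but your route diverges substantially from the paper's and is considerably more laborious. The paper never descends to the quotient $\bar V=\syn_{N,p}/\syn_{N_P,[p]}$ (which is not a $\Sigma$--representation and on which the structure theory of Section~3 does not directly apply). Instead it stays at the fundamental level $(V^n,A_\Sigma)$ throughout and invokes the equivariant center manifold theorem of \cite{new}: the center manifold of $\Gamma_h$ is the image of an equivariant, synchrony--preserving map defined on the center subspace $W'_c\subset V^n$, so the center manifolds of $\tilde\gamma_g$ and $\gamma_f$ are obtained by intersecting with $\syn_{N,p}$ and $\syn_{N_P,[p]}$ respectively. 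The genericity step is then a one--liner: for each indecomposable summand $U_i$ of $W'_c$ with $U_i\cap\syn_{\pi_P}=\{0\}$, add a multiple of the equivariant projection onto $U_i$; this leaves $\gamma_f$ untouched and pushes $U_i$ out of the center subspace. After this, every remaining summand of $W'_c$ meets $\syn_{\pi_P}$; any summand meeting it inside $\syn_0$ would (via Theorem~\ref{submon}) be isomorphic to a summand of $W'$ and contribute to $W_c\cap\syn_0$, contradicting the hypothesis. Hence $W'_c$ is isomorphic to a sub--representation of $W$, and since equivariant isomorphisms preserve robust synchrony spaces, the equality $W\cap\syn_{N,p}=W\cap\syn_{N_P,[p]}$ from Theorem~\ref{main} transfers to $W'_c$. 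This bypasses your Schur--complement\,/\,large--parameter estimate and your eigenvalue--pinning argument entirely.

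Your approach can be made to work, but note that the ``pinning'' step is doing real work and is not yet justified in your write--up: you need that $\nil_\Sigma(U)$ is a \emph{nilpotent ideal} (not merely consisting of nilpotent elements), so that a matrix over $\homc_\Sigma(U)$ and its scalar image in $\mathrm{Mat}_k(\R)$ have the same spectrum; you also need to check that the $W$--summands inside $V^n_\sharp$ contribute nothing to $\bar V_\sharp$ (which follows from $W\cap\syn_{N,p}=W\cap\syn_{N_P,[p]}$) before the $W'$--isotypic argument applies. What you gain is that you avoid quoting the equivariant center manifold theorem; what you lose is brevity and the need to shuttle carefully between the network level (where $\bar L$ lives) and the fundamental level (where the representation theory lives).
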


\begin{proof}
We let $\Lambda'_f$ denote the set of smooth fundamental vector fields \\
 $\Gamma_h: (V^n, A_{\Sigma}) \times \Omega \rightarrow (V^n, A_{\Sigma})$ such that $\gamma_f = \Gamma_h|_{\syn_{N_P,[p]}\times \Omega}$. Note that $\Gamma_h$ is in $\Lambda'_f$ if and only if $\Gamma_h|_{\syn_{N, p}\times \Omega}$ is in $\Lambda_f$. We pick an element $\tilde{\gamma}_g$ in $\Lambda_f$ and a corresponding element $\Gamma_h$ in $\Lambda'_f$ with $\Gamma_h|_{\syn_{N, p}\times \Omega} = \tilde{\gamma}_g$. Next, we decompose $(V^n, A_{\Sigma})$ as in theorem \ref{main}: 

\begin{equation}
 V^n =  W \oplus W'\,,
\end{equation}
with 

\begin{equation}
W \cap \syn_{N,p} =  W \cap \syn_{N_P,[p]}
\end{equation}
and 
\begin{equation}\label{inte5}
W'  \cap \syn_{\pi_P} = \syn_{0} \,.
\end{equation}
Let $W'_c$ be the center subspace of $D_x\Gamma_h(0,0)$. We write

\begin{equation}
W'_c = \bigoplus_{i=1}^l  U_i
\end{equation}
as the decomposition into indecomposable representations. If the component $U_i$ has a trivial intersection with $\syn_{\pi_P}$ then we may add an equivariant linear map that vanishes on the complement of $U_i$ to remove it from $W'_c$ without influencing $\gamma_f$. Hence we see that for an open dense set of $\Lambda'_f$, and hence of $\Lambda_f$, no such components are present. If $W'_c$ now has any components isomorphic to a component of $W'$, then by equation \eqref{inte5} its (non-trivial) intersection with $\syn_{\pi_P}$ is in $\syn_0$, contradicting the assumption that $W_c \cap \syn_0 = W'_c \cap \syn_0 = \{0\}$. We see that $W'_c$ is therefore isomorphic to a subspace of $W$. In \cite{new} it is shown that the center manifold of $\Gamma_h$ can be seen as the image of an equivariant map from $W'_c \times \R^k$ to $V^n \times \R^k$. Because this map furthermore preserves synchrony spaces, we find the center manifolds of $\gamma_f$ and $\tilde{\gamma}_g$ by restricting this map to $(\syn_{N_P,[p]} \cap W'_c) \times \R^k$ and $(\syn_{N,p} \cap W'_c) \times \R^k$, respectively. However, since $W'_c$ is isomorphic to a subspace of $W$ and because equivariant isomorphisms preserve synchrony spaces, we have that 

\begin{equation}
W'_c \cap \syn_{N,p} =  W'_c \cap \syn_{N_P,[p]}\,.
\end{equation}
This proves that the center manifolds, and hence the bifurcations agree.
\end{proof}

\section{Example: Ring feed-forward networks}
In this section we will apply the machinery we have developed so far to a generalization of the feed forward network. It will turn out that the reduced vector fields of this network can be completely understood by that of two of its quotient networks.

\begin{defi}
The $(n,k)$-\textit{ring feed-forward network} $R_{n,k}$ is the homogeneous coupled cell network with nodes $C := \{c_i\}_{i=0}^{n+k-1}$ and with monoid $\Sigma$ generated by a single element $\sigma$. This element is given on the nodes by $\sigma(c_i) = c_{i+1}$ for $i <n+k-1$ and $\sigma(c_{n+k-1}) = c_k$. We will collectively refer to the $(n,k)$-ring feed-forward networks as simply the \textit{ring feed-forward networks}.
\end{defi}

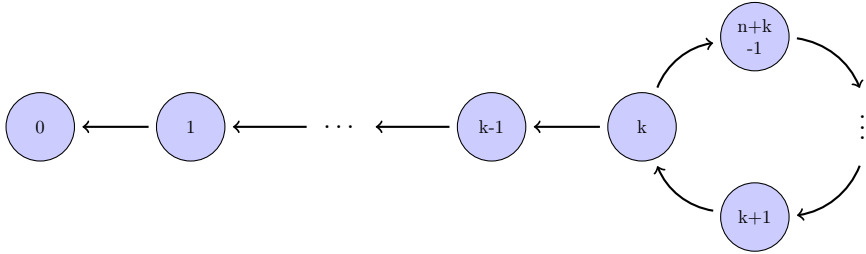
\begin{figure}[h]
\centering
\begin{tikzpicture}
\node[circle, draw, fill=blue!20, scale=0.7, minimum size=1.3cm] at (0,0) (a) {0} ; 
\node[circle, draw, fill=blue!20, scale=0.7, minimum size=1.3cm] at (2,0) (b) {1};
\node[] at (4,0) (c) {\dots} ;
\node[circle, draw, fill=blue!20, scale=0.7, minimum size=1.3cm] at (6,0) (d) {k-1}  ;
\node[circle, draw, fill=blue!20, scale=0.7, minimum size=1.3cm] at (8,0) (e) {k}  ;
\node[circle, draw, fill=blue!20, scale=0.7, minimum size=1.3cm] at (9.5,-1.2) (f) {k+1}  ;
\node[align=left] at (11,-0.3) (g1) {} ;
\node[align=left] at (11,0.3) (g2) {} ;
\node[align=left] at (10.93,0.1) (g) {\vdots}  ;
\node[circle, draw, fill=blue!20,align=center,  scale=0.7, minimum size=1.3cm] at (9.5,1.2) (h) {n+k \\ -1}  ;
\draw[<-, thick, shorten >= 0.1 cm, shorten <= 0.1 cm] (a) edge (b) (b) edge (c) (c) edge (d) (d) edge  (e) (e) edge[bend right] (f) (f) edge[bend right] (g1) (g2) edge[bend right] (h) (h) edge[bend right] (e);
\end{tikzpicture}\caption{The $(n,k)$-ring feed-forward network $R_{n,k}$.}\label{fig:pic3}
\end{figure}

\noindent We claim that the set of nodes $B:=\{c_k, \dots c_{n+k-1}\} \subset C$ is a projection block in the network $R_{n,k}$. Indeed, it is clearly a block as we have $\sigma(B) = B$, and therefore $\sigma^i(B) = B$ for all $i \geq 0$. To see that $B$ is a projection block, recall that the monoid $\Sigma$ is generated by the single element $\sigma$. By theorem \ref{pict}, $B$ is then a projection block if and only if $\sigma$ restricts to a bijection on $B$ and any element outside of $B$ is sent to $B$ by some power of $\sigma$. By the definition of the network $R_{n,k}$ these two conditions are indeed satisfied. \\
By identifying the block $B$ with a point, we get a network with $k+1$ cells $C_P:=\{[c_0], \dots [c_{k-1}], [B] \}$ and with a monoid $\Sigma_P$ generated by the element $[\sigma]$. This element is given on $C_P$ by $[\sigma]([c_i]) = [c_{i+1}]$ for $i < k-1$ and $[\sigma]([c_{k-1}]) = [\sigma]([B]) = B$. In particular, we see that this network is equal to $R_{1,k}$. This latter network is known in the literature as a feed-forward network, and the bifurcations of its admissible vector fields are quite well understood, see \cite{feedf}. In particular, setting $V = \R$ and $\Omega \subset \R$ it is known that the generic steady state bifurcations of $R_{1,k}$ from a fully synchronous point are either a fully synchronous saddle node bifurcation or a synchrony breaking bifurcation. In this latter bifurcation there are, in addition to a fully synchronous branch, $k$ branches scaling as $|\la|^{l_1}$ to $|\la|^{l_k}$, where we have $l_i := \frac{1}{2^{i-1}}$. \\
Furthermore, if we set $V = \C$ and $\Omega \subset \R$ then the feed-forward network admits a synchrony breaking Hopf bifurcation supporting $k$ branches of periodic orbits with amplitudes scaling as $|\la|^{p_1}$ to $|\la|^{p_k}$, where we have set $p_i := \frac{1}{2(3^{i-1})}$.\\
Of the bifurcations just described, the synchrony breaking ones correspond to a center subspace with trivial intersection with the fully synchronous space $\syn_0$. Hence, by the previous chapter we conclude that these synchrony breaking bifurcations occur in the network $R_{n,k}$ as well. To determine the bifurcations corresponding to other indecomposable representations, let us have a more detailed look at the network $R_{n,k}$.

\begin{thr}\label{sameas}
Any ring feed-forward network is isomorphic to its own fundamental network.
\end{thr}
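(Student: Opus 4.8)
The plan is to show that the monoid $\Sigma$ of the ring feed-forward network $R_{n,k}$ acts \emph{faithfully} on the node set $C$, and moreover that there is a cell $p \in C$ whose orbit $\{\sigma^j(p) : j \geq 0\}$ is all of $C$; together these will force the map from $C$ into the regular representation's node set $\Sigma$ (via $c \mapsto$ the monoid element sending $p$ to $c$) to be a bijection, which is exactly the statement that the synchrony space $\syn_{N,p}$ is all of $(V^n, A_\Sigma)$ — i.e. that $R_{n,k}$ coincides with its own fundamental network.

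First I would describe $\Sigma$ concretely. Since $\Sigma$ is generated by the single element $\sigma$ with $\sigma(c_i) = c_{i+1}$ for $i < n+k-1$ and $\sigma(c_{n+k-1}) = c_k$, the powers $\id, \sigma, \sigma^2, \dots$ are eventually periodic: one checks that $\sigma^{n+k-1}$ maps $c_0 \mapsto c_{n+k-1} \mapsto \dots$ wait — more carefully, $\sigma^j(c_0) = c_j$ for $j \leq n+k-1$, and thereafter the iterates cycle through the block $B = \{c_k, \dots, c_{n+k-1}\}$ with period $n$. So $\Sigma = \{\id, \sigma, \dots, \sigma^{n+k-2}, \sigma^{n+k-1}, \dots\}$ stabilizes: $\sigma^{m} = \sigma^{m+n}$ for all $m \geq k$, and the distinct elements are exactly $\id, \sigma, \dots, \sigma^{k+n-1}$, so $\#\Sigma = k + n = \#C$. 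The key point to verify is that these $k+n$ powers are genuinely distinct \emph{as functions on $C$}: this follows by evaluating at $c_0$, since $\sigma^j(c_0) = c_j$ for $0 \le j \le k+n-1$ are all different nodes. Hence $\Sigma$ acts faithfully and $\#\Sigma = \#C$.

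Next I would exhibit a fully dependent cell. Take $p = c_0$. Then $\{\sigma(c_0) : \sigma \in \Sigma\} = \{c_0, c_1, \dots, c_{k+n-1}\} = C$, so $c_0$ is fully dependent. By the general setup in Section 2, $R_{n,k}$ is then realized as $\Gamma_f|_{\syn_{N,c_0}}$ inside its fundamental network $(V^n, A_\Sigma)$, where $\syn_{N,c_0} = \{X_{\sigma_i} = X_{\sigma_j} \text{ if } \sigma_i(c_0) = \sigma_j(c_0)\}$. But the map $\Sigma \to C$, $\tau \mapsto \tau(c_0)$, is a bijection — it is surjective since $c_0$ is fully dependent, and injective since $\#\Sigma = \#C = k+n$ (equivalently, by faithfulness evaluated at $c_0$). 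Therefore the condition $\sigma_i(c_0) = \sigma_j(c_0)$ forces $\sigma_i = \sigma_j$, so $\syn_{N,c_0}$ imposes no identifications at all and equals the entire space $(V^n, A_\Sigma)$. Thus the fundamental network vector field $\Gamma_f$ restricted to $\syn_{N,c_0}$ is just $\Gamma_f$ itself, and $R_{n,k} \cong (V^n, A_\Sigma)$ as claimed.

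The only genuinely delicate step is the counting argument: one must be careful that the powers $\sigma^0, \dots, \sigma^{k+n-1}$ are all distinct and that $\sigma^{k+n} $ already appeared (so $\#\Sigma$ is exactly $k+n$, not more). I expect this to be the main obstacle, because it requires tracking precisely when the iterates of $\sigma$ enter and begin cycling within the block $B$; once the orbit structure $c_0 \to c_1 \to \dots \to c_{n+k-1} \to c_k \to \dots$ is pinned down, the bijectivity of $\tau \mapsto \tau(c_0)$ and hence the whole statement follow immediately.
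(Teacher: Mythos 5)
Your proposal is correct and takes essentially the same route as the paper: establish that $c_0$ is fully dependent, show $\sigma^{k+n}=\sigma^{k}$ so that $\#\Sigma\le n+k$, obtain distinctness of $\id,\sigma,\dots,\sigma^{n+k-1}$ by evaluating at $c_0$, and conclude that $\tau\mapsto\tau(c_0)$ is a bijection, so $\syn_{N,c_0}$ imposes no identifications. The one point to state with full care is that $\sigma^{k+n}=\sigma^{k}$ must hold as maps on all of $C$, not merely at $c_0$; the paper gets this from commutativity of powers of $\sigma$, while your observation that $\sigma^{k}(C)\subset B$ and $\sigma^{n}|_{B}=\id|_{B}$ yields the same conclusion.
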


\begin{proof}
We begin by noting that $c_0$ is a fully dependent node for any network $R_{n,k}$. Hence, every ring feed-forward network is a quotient network of its fundamental network. It remains to show that the number of nodes of the fundamental network is equal to that of the corresponding ring feed-forward network. In other words, since the nodes of the fundamental network are the elements of the monoid $\Sigma$, we need to show that $\#\Sigma = \#C$. However, in $R_{n,k}$ we know that $\sigma^{n+k}(c_0) = \sigma^{k}(c_0) = c_k$, from which it follows that

\begin{equation}
\begin{split}
 \sigma^{n+k}(c_i) &=  \sigma^{n+k}\sigma^i(c_0) =  \sigma^i\sigma^{n+k}(c_0) \\
\sigma^i\sigma^{k}(c_0) &= \sigma^k\sigma^{i}(c_0) = \sigma^{k}(c_i) \quad.
\end{split}
\end{equation}
Hence we see that $\sigma^{n+k} = \sigma^{k}$ as functions, from which it follows that $\#\Sigma \leq n+k = \#C$. The fact that $\#\Sigma \geq \#C$ follows from the fact that $R_{n,k}$ can be realized as a quotient network of its fundamental network or from the fact that the functions $\sigma^i$ send $c_0$ to different nodes $c_i$ for $i = 0, \dots k+n-1$. This proves the theorem.
\end{proof}
\noindent In light of theorem \ref{sameas} we may think of $R_{n,k}$ as its own fundamental network. In doing so, we will write $\Sigma = C: = \{\sigma^0=\id, \sigma^1, \dots \sigma^{n+k-1}\}$ where an element of $\Sigma$ acts on an element of $C$ by composition. Let us furthermore write $(V^{n+k}, A_\Sigma)$ for the regular representation of $R_{n,k}$. The equivariant vector fields on $(V^{n+k}, A_\Sigma)$ are then exactly the admissible vector fields of $R_{n,k}$. Furthermore, the regular representation space $(V^{k+1}, A_{\Sigma_P})$ for the network $R_{1,k}$ can be realized as an invariant robust synchrony space $(V^{k+1}, A_{\Sigma_P}) = \syn_{\pi_P} \subset (V^{n+k}, A_\Sigma)$. More specifically, $\syn_{\pi_P}$ is given by 

\begin{equation}
\syn_{\pi_P} = \{X_{\sigma^i} = X_{\sigma^j} \forall \text{ } i,j \in \{k, \dots k+n-1 \} \} \quad.
\end{equation}
Because $B$ is a projection block, it follows from lemma \ref{idemp} that there exists an idempotent element $\sigma^{T} \in \Sigma$ such that $\sigma^{T}(C) = B$. By lemma \ref{idemproj} this element gives rise to a projection $A_{\sigma^T}$ on $(V^{n+k}, A_\Sigma)$  given by $(A_{\sigma^T}X)_{\sigma^i} := X_{\sigma^{i+T}}$, where we use the convention of writing $X_{\sigma^i} = X_{\sigma^j}$ if $i,j \geq k$ and $n|(i-j)$. Furthermore, because $\sigma^T$ satisfies $\sigma^{T}(C) = B$, it necessarily follows that $T \geq k$. From this we see that 

\begin{equation}
\begin{split}
&\im(A_{\sigma^T}) = \{X_{\sigma^i} = X_{\sigma^j} \text{ if } n|(i-j) \} \quad, \\
&\ker(A_{\sigma^T}) = \{X_{\sigma^i} = 0 \text{ } \forall \text{ } i \in \{k, \dots k+n-1 \}  \}\quad.
\end{split}
\end{equation}
Note that $\ker(A_{\sigma^T})$ is contained in $\syn_{\pi_P}$, which accounts for the bifurcations occurring in both $R_{1,k}$ and $R_{n,k}$, as we have found by theorem $\ref{main}$ and corollary \ref{main2}. Furthermore, if $V = \R$ then it can be shown that $\ker(A_{\sigma^T})$ is an indecomposable representation of $\Sigma_P$, and hence of $\Sigma$. Note furthermore that $\Sigma$ acts on $\ker(A_{\sigma^T})$ by nilpotent maps, a fact that we will use later on. \\
To summarize so far, we know that $(V^{n+k}, A_\Sigma)$ decomposes into the invariant spaces $\im(A_{\sigma^T})$ and $\ker(A_{\sigma^T})$. The bifurcations corresponding to $\ker(A_{\sigma^T})$ are now related to the synchrony breaking bifurcations in $R_{1,k}$, and this was ultimately done by noting that $R_{1,k}$ is a quotient network of $R_{n,k}$. We will now explain the bifurcations in $R_{n,k}$ corresponding to $\im(A_{\sigma^T})$ by considering yet another, well understood quotient network of $R_{n,k}$.

\begin{thr}\label{znz}
Let $\Sigma$ denote the monoid of $R_{n,k}$. The map
\begin{equation}
\begin{split}
\pi_n: \text{}&\Sigma \rightarrow \Z /n\Z \\
&\sigma^i \mapsto [i] 
\end{split}
\end{equation}
realizes $\Z /n\Z$ as a quotient monoid of $\Sigma$. As a result, the regular representation of $\Z /n\Z$ is realized in $(V^{n+k}, A_\Sigma)$ as the space \\ 
$\im(A_{\sigma^T}) = \{X_{\sigma^i} = X_{\sigma^j} \text{ if } n|(i-j) \}$.
\end{thr}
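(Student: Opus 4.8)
The statement has two parts: (i) that $\pi_n : \sigma^i \mapsto [i]$ is a well-defined surjective monoid homomorphism onto $\Z/n\Z$, and (ii) that the resulting synchrony space $\syn_{\pi_n} \subset (V^{n+k}, A_\Sigma)$ coincides with $\im(A_{\sigma^T})$. The plan is to dispatch (i) by a direct check and then reduce (ii) to the explicit description of $\im(A_{\sigma^T})$ already recorded just before the statement.

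For part (i), the main thing to verify is that $\pi_n$ is well-defined: the elements of $\Sigma$ are the functions $\sigma^0 = \id, \sigma^1, \dots, \sigma^{n+k-1}$, and we showed in the proof of theorem \ref{sameas} that $\sigma^{n+k} = \sigma^k$ as functions, equivalently $\sigma^{i+n} = \sigma^i$ for all $i \geq k$. So $\Sigma$, as an abstract monoid, is $\{\sigma^0, \dots, \sigma^{n+k-1}\}$ with $\sigma^i \circ \sigma^j = \sigma^{i+j}$, where the exponent $i+j$ is reduced modulo $n$ into the range $\{k, \dots, n+k-1\}$ once it reaches $n+k$. In particular, whenever $\sigma^i = \sigma^j$ in $\Sigma$ we have either $i = j$ or both $i,j \geq k$ and $n \mid (i-j)$; in both cases $[i] = [j]$ in $\Z/n\Z$, so $\pi_n$ is well-defined. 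That $\pi_n$ is a homomorphism is then immediate from $\pi_n(\sigma^i \circ \sigma^j) = [i+j] = [i] + [j] = \pi_n(\sigma^i) + \pi_n(\sigma^j)$ and $\pi_n(\id) = [0]$, and surjectivity is clear since $[0], [1], \dots, [n-1]$ are all hit by $\sigma^0, \dots, \sigma^{n-1}$. By Theorem \ref{inclu}, $\Z/n\Z$ is then a quotient monoid of $\Sigma$ and its regular representation $(V^n, A_{\Z/n\Z})$ is realized inside $(V^{n+k}, A_\Sigma)$ as the robust synchrony space $\syn_{\pi_n} = \{X_{\sigma^i} = X_{\sigma^j} \text{ if } \pi_n(\sigma^i) = \pi_n(\sigma^j)\}$.

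For part (ii), it remains to identify $\syn_{\pi_n}$ with $\im(A_{\sigma^T})$. By definition $\syn_{\pi_n} = \{X_{\sigma^i} = X_{\sigma^j} \text{ if } [i] = [j]\} = \{X_{\sigma^i} = X_{\sigma^j} \text{ if } n \mid (i-j)\}$, which is exactly the description of $\im(A_{\sigma^T})$ displayed in the text immediately preceding Theorem \ref{znz}. (That description was itself obtained from remark \ref{splitt}: $\im(A_{\sigma^T}) = \{X_{\sigma^i} = X_{\sigma^j} \text{ if } \sigma^T \circ \sigma^i = \sigma^T \circ \sigma^j\}$, and since $T \geq k$ one has $\sigma^{T+i} = \sigma^{T+j}$ precisely when $n \mid (i-j)$.) Hence the two spaces coincide, and since by Theorem \ref{inclu} $\syn_{\pi_n}$ carries the regular representation of $\Z/n\Z$, so does $\im(A_{\sigma^T})$.

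The only step requiring any care is the well-definedness bookkeeping in part (i) — keeping track of the "folding" of exponents $\{0, \dots, n+k-1\}$ under multiplication and checking it is compatible with reduction mod $n$ — but this is entirely routine given $\sigma^{n+k} = \sigma^k$; there is no substantive obstacle, as the hard structural work (Theorems \ref{inclu} and \ref{sameas}, and the computation of $\im(A_{\sigma^T})$) has already been done.
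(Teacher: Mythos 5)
Your proposal is correct and follows essentially the same route as the paper: verify that $\pi_n$ is a well-defined surjective monoid homomorphism, invoke Theorem \ref{inclu} to realize the regular representation of $\Z/n\Z$ as $\syn_{\pi_n}$, and then match $\syn_{\pi_n}$ with the explicit description of $\im(A_{\sigma^T})$. The extra bookkeeping you supply for well-definedness (the folding $\sigma^{i+n}=\sigma^i$ for $i\geq k$) is just a more detailed version of the paper's one-line argument.
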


\begin{proof}
First of all we see that the map $\pi_n$ is well defined, as $\sigma^i = \sigma^j$ implies $n|(i-j)$. Next, it is clear that the map is a surjective morphism of monoids. I.e we have that $\pi_n(\sigma_{i} \circ \sigma_{j}) = [i+j] = [i]+[j]= \pi_n(\sigma^i) + \pi_n(\sigma^j)$. Finally, by theorem \ref{inclu} we see that the regular representation of $\Z/n\Z$ is isomorphic to the representation of $\Sigma$ in $(V^{n+k}, A_{\Sigma})$ restricted to the invariant subspace $\syn_{\pi_n}:= \{X_{\sigma^i} = X_{\sigma^j} \text{ if } \pi_n(\sigma^i) = \pi_n(\sigma^j)\}$. Because $\pi_n(\sigma^i) = \pi_n(\sigma^j)$ if and only if $n|(i-j)$, we conclude that $\syn_{\pi_n} = \im(A_{\sigma^T})$. This proves the theorem.
\end{proof}
It follows from theorems \ref{znz} and \ref{submon} that the indecomposable complementable sub-representations of $(V^{n+k}, A_{\Sigma})$ contained in $\im(A_{\sigma^T})$ are exactly the irreducible sub-representations of the regular representation of $\Z/n\Z$. These are well understood. Furthermore, the irreducible representations of $\Z/n\Z$ are mutually non isomorphic and are as representations of $\Sigma$ non isomorphic to $\ker(A_{\sigma^T})$, as $\sigma$ acts as a nilpotent map in the latter representation. This proves that the generic steady state bifurcations in a fully synchronous point of $R_{n,k}$, given that $V= \R$ and $\Omega \subset \R$, are exactly given by the synchrony breaking bifurcation of $R_{1,k}$ and the generic steady state bifurcations of $\Z/n\Z$.

\end{document}